\crefname{equation}{}{}
\crefname{lem}{Lemma}{Lemmas}
\crefname{thm}{Theorem}{Theorems}
\DeclareMathOperator*{\Beta}{B}
\newcommand{\nm}[1]{\left\Vert #1 \right\Vert}
\newcommand{\snm}[1]{\left\vert #1 \right\vert}
\newcommand{\snmii}[1]
{
  \left\vert\kern-0.25ex
  \left\vert\kern-0.25ex
  \left\vert
  #1
  \right\vert\kern-0.25ex
  \right\vert\kern-0.25ex
  \right\vert
}
\newtheorem{assum}{Assumption}
\newtheorem{corollary}{Corollary}[section]
\newtheorem{lem}{Lemma}[section]
\newtheorem{rem}{Remark}[section]
\newtheorem{thm}{Theorem}[section]
\numberwithin{equation}{section}
\begin{document}
\title
{
  \Large\bf A new smoothness result for Caputo-type fractional ordinary
  differential equations
  \thanks
  {
    This work was supported in part by  Major Research Plan of National
    Natural Science Foundation of China (91430105) and National Natural Science Foundation
    of China (11401407).
  }
}
\author
{
  Binjie Li \thanks{Email: libinjiefem@yahoo.com},
  Xiaoping Xie \thanks{Corresponding author. Email: xpxie@scu.edu.cn},
  Shiquan Zhang \thanks{Email: shiquanzhang@scu.edu.cn}\\\\
  {School of Mathematics, Sichuan University, Chengdu 610064, China}
}
\date{}
\maketitle

\begin{abstract}
 We present a new smoothness result for Caputo-type fractional ordinary
 differential equations, which reveals that, subtracting a non-smooth function
 that can be obtained by the information available, a non-smooth solution
 belongs to $ C^m $ for some positive integer $ m $.

 \vskip 0.2cm\noindent
 {\bf Keywords:} Caputo, fractional differential equation, smoothness.
\end{abstract}

\section{Introduction}
Let us consider the following model problem: seek $ 0 < h \leqslant a $ and
\[
  y \in \left\{
    v \in C[0, h]:
    \nm{v-c_0}_{C[0, h]} \leqslant b
  \right\}
\]
such that
\begin{equation}
  \label{eq:model}
  \begin{cases}
    D_*^\alpha y = f(x, y), & 0 \leqslant x \leqslant h, \\
    y(0) = c_0, &
  \end{cases}
\end{equation}
where $ a > 0 $, $ b > 0 $, $ 0 < \alpha < 1 $, $ c_0 \in \mathbb R $, and
\[
  f \in C\left(
    [0, a] \times [c_0-b, c_0+b]
  \right).
\]
Above, the Caputo-type fractional differential operator $ D_*^\alpha: \, C[0, h]
\to C_0^{\infty}(0, h)' $ is given by
\begin{equation}
  D_*^\alpha z := D J^{1-\alpha} \big( z-z(0) \big)
\end{equation}
for all $ z \in C[0, h] $, where $ D $ denotes the well-known first order
generalized differential operator, and the Riemann-Liouville fractional integral
operator $ J^{1-\alpha}: C[0, h] \to C[0, h] $ is defined by
\[
  J^{1-\alpha} z (x) := \frac 1{ \Gamma(1-\alpha) }
  \int_0^x (x-t)^{-\alpha} z(t) \, \mathrm{d} t,
  \quad 0 \leqslant x \leqslant h,
\]
for all $ z \in C[0, h] $.

By \cite[Lemma 2.1]{Diethelm;2002}, the above problem is equivalent to seeking
solutions of the following Volterra integration equation:
\begin{equation}
  \label{eq:model2}
  y(x) = c_0 + \frac 1{\Gamma(\alpha)}
  \int_0^x (x-t)^{\alpha-1} f\big(t, y(t) \big) \,
  \mathrm{d}t.
\end{equation}
Diethelm and Ford~\cite{Diethelm;2002} proved that, if $ f $ is continuous, then
\cref{eq:model2} has a solution $ y \in C[0, h] $ for some $ 0 < h \leqslant a
$, and this solution is unique if $ f $ is Lipschitz continuous. A natural
question arises whether $ y $ can be smoother than being continuous. This is not
only of theoretical value, but also of great importance in developing numerical
methods for \cref{eq:model2}.

To this question, Miller and Feldstein~\cite{Miller;Feldstein;1971} gave the
first answer: if $f$ is analytic, then $ y $ is analytic in $ (0, h) $ for some
$ 0 < h \leqslant a $. Then Lubich \cite{Lubich;1983} considered the behavior of
the solution near $ 0 $. He showed that, if $ f $ is analytic at the origin,
then there exists a function $ Y $ of two variables that is analytic at the
origin such that
\[
  y(x) = Y(x, x^\alpha), \quad 0 \leqslant x \leqslant h,
\]
for some $ 0 < h \leqslant a $. The above work suggests that non-smoothness of
the solution to \cref{eq:model} is generally unavoidable. However,
Diethelm~\cite{Diethelm;2007} established a sufficient and necessary condition
under which $ y $ is analytic on $ [0, h] $ for some $ 0 < h \leqslant a $. But,
since we have already seen that non-smoothness of $ y $ is generally
unavoidable, it is not surprising that this condition is unrealistic. Recently,
Deng~\cite{Deng;2010} proposed two conditions: under the first condition the
solution belongs to $ C^m $ for some positive integer $ m $; under the second
one the solution is a polynomial. It should be noted that, the second condition
is just the one proposed in~\cite{Diethelm;2007}, and the first condition is
also unrealistic.

The main result of this paper is that, although the solution $ y $ of
\cref{eq:model} does not generally belong to $ C^m $ for some positive integer $
m $, we can still construct a non-smooth function of the form
\[
  S(x) := c_0 + \sum_{j=1}^n c_j x^{\gamma_j},
\]
such that
\[
  y - S \in C^m,
\]
provided $ f $ is sufficiently smooth. Most importantly, given $ c_0 $ and $ f
$, we can obtain $ S $ by a simple computation. This is significant in the
development of numerical methods for \cref{eq:model}. In addition, we obtain a
sufficient and necessary condition under which $ y \in C^m $. We note that this
condition is essentially the same as the first condition mentioned already
in~\cite[Theorem 2.8]{Deng;2010}, but the necessity was not considered therein.

The rest of this paper is organized as follows. In \cref{sec:notation} we
introduce some basic notation and preliminaries. In \cref{sec:main} we state the
main results of this paper, and present their proofs in \cref{sec:proof}.

\section{Notation and Preliminaries}
\label{sec:notation}
Let $ 0 < h < \infty $. We use $ C[0, h] $ to denote the space of all continuous
real functions defined on $ [0, h] $. For any $ k \in \mathbb N_{>0} $ and $ 0
\leqslant \gamma \leqslant 1 $, define
\begin{align}
  C^k[0, h] & := \left\{
    v \in C[0, h]: v^{(j)} \in C[0, h] \quad
    \text{for $ j = 1, 2, \dotsc, k $}
  \right\}, \\
  C^{k,\gamma}[0, h] & := \left\{
    v \in C^k[0, h]: \max_{ 0 \leqslant x < y \leqslant h }
    \snm{v}_{ C^{k,\gamma}[0, h] } < \infty
  \right\},
\end{align}
and endow the above two spaces with two norms respectively by
\begin{align}
  \nm{v}_{C^k[0, h]} & :=
  \max_{ 0 \leqslant j \leqslant k }
  \max_{ 0 \leqslant x \leqslant h }
  \snm{v^{(j)}(x)}
  & & \text{ for all $ v \in C^k[0, h] $,} \\
  \nm{v}_{ C^{k,\gamma}[0, h] } & :=
  \max\left\{
    \nm{v}_{C^k[0, h]}, \,
    \snm{v}_{ C^{k,\gamma}[0, h] }
  \right\}
  & & \text{ for all $ v \in C^{k,\gamma}[0, h] $ }.
\end{align}
Here the semi-norm $ \snm{\cdot}_{ C^{k,\gamma}[0, h] } $ is given by
\[
  \snm{v}_{ C^{k,\gamma}[0, h] } :=
  \sup_{0 \leqslant x < y \leqslant h}
  \frac{ \snm{v^{(k)}(x)-v^{(k)}(y)} }{ (y-x)^\gamma }
\]
for all $ v \in C^{k,\gamma}[0, h] $, and it is obvious that $ C^k[0, h] $
coincides with $ C^{k,0}[0, h] $.

For any $ s \in \mathbb N_{>0} $, define
\[
  \Lambda_s := \big\{
    \beta = (\beta_1, \beta_2, \dots, \beta_s) \in \{1,2\}^s
  \big\},
\]
and, for any $ \beta \in \Lambda_s $, we use the following notation:
\[
  \partial_\beta g :=
  \frac\partial{\partial x_{\beta_s}}
  \frac\partial{\partial x_{\beta_{s-1}}}
  \cdots
  \frac{\partial}{\partial x_{\beta_1}} g(x_1, x_2),
\]
where $ g $ is a real function of two variables. In addition, we define
\[
  \Lambda_0 := \left\{ \emptyset \right\},
\]
and denote by $ \partial_{\emptyset} $ the identity mapping.

\section{Main Results}
\label{sec:main}
Let us first make the following assumption on $ f $.
\begin{assum}
  There exist a positive integer $ n $, and a positive constant $ M $ such that
  \begin{align*}
    f \in C^n \left( [0, a] \times [c_0-b, c_0+b] \right), \\
    \max_{ (x, y) \in [0, a] \times [c_0-b, c_0+b] }
    \max_{
      \substack{
        0 \leqslant i \leqslant n \\
        0 \leqslant j \leqslant n \\
        i + j \leqslant n
      }
    }
    \snm{
      \frac{\partial^i}{ \partial x^i }
      \frac{\partial^j}{ \partial y ^j }
      f(x, y)
    } \leqslant M.
  \end{align*}
\end{assum}
\noindent Throughout this paper, we assume that the above assumption is
fulfilled.

Define $ J \in \mathbb N $ and a strictly increasing sequence $ \left\{ \gamma_i
\right\}_{i=1}^J $ by
\begin{equation}
  \left\{
    \gamma_j: 1 \leqslant j \leqslant J
  \right\} =
  \left\{
    i+j\alpha:
    i, j \in \mathbb N, \
    0 < i+j\alpha < m
  \right\},
\end{equation}
where
\begin{equation}
  m := \max\left\{ j \in \mathbb N: j < n\alpha \right\}.
\end{equation}
Define $ c_1 $, $ c_2 $, $ \dots $, $ c_J \in \mathbb R $ by
\begin{equation}
  \label{eq:I_1}
  Q(x) - S(x) + c_0 \in \text{span}
  \left\{
    x^{i+j\alpha}:
    i, j \in \mathbb N, \
    i+j\alpha \geqslant m
  \right\},
\end{equation}
where
\begin{equation}
  \label{eq:def-Q}
  Q(x) :=
  \sum_{s=0}^{n-1}
  \sum_{ \beta\in\Lambda_s }
  \frac{ \partial_\beta f(0, c_0) }{ \Gamma(\alpha) }
  \int_0^x (x-t_0)^{\alpha-1} \, \mathrm{d}t_0
  \prod_{k=1}^s \int_0^{ t_{k-1} }
  \frac{ 1+(-1)^{\beta_k+1} }2 +
  \frac{ 1+(-1)^{\beta_k} }2
  \sum_{j=1}^J \gamma_j c_j t_k^{\gamma_j-1} \, \mathrm{d}t_k,
\end{equation}
and
\begin{equation}
  \label{eq:def-S}
  S(x) := c_0 + \sum_{j=1}^J c_j x^{\gamma_j}.
\end{equation}
Above and throughout, a product of a sequence of integrals should be understood
in expanded form. For example, \cref{eq:def-Q} is understood by
\begin{align*}
  Q(x) :=
  \sum_{s=0}^{n-1}
  \sum_{ \beta\in\Lambda_s }
  \frac{ \partial_\beta f(0, c_0) }{ \Gamma(\alpha) }
  \int_0^x (x-t_0)^{\alpha-1} \, \mathrm{d}t_0
  & \int_0^{ t_0 }
  \frac{ 1+(-1)^{\beta_1+1} }2 +
  \frac{ 1+(-1)^{\beta_1} }2
  \sum_{j=1}^J \gamma_j c_j t_1^{\gamma_j-1} \, \mathrm{d}t_1 \\
  & \int_0^{ t_1 }
  \frac{ 1+(-1)^{\beta_2+1} }2 +
  \frac{ 1+(-1)^{\beta_2} }2
  \sum_{j=1}^J \gamma_j c_j t_2^{\gamma_j-1} \, \mathrm{d}t_2 \\
  & \dots \\
  & \int_0^{ t_{s-1} }
  \frac{ 1+(-1)^{\beta_s+1} }2 +
  \frac{ 1+(-1)^{\beta_s} }2
  \sum_{j=1}^J \gamma_j c_j t_s^{\gamma_j-1} \, \mathrm{d}t_s.
\end{align*}

\begin{rem}
  It is easy to see that we can express $ Q $ in the form
  \[
    Q(x) = \sum_{j=1}^L d_j x^{\gamma_j},
  \]
  where $ \left\{ \gamma_j \right\}_{j=J+1}^L $ is a strictly increasing
  sequence such that $ \gamma_J < \gamma_{J+1} $ and
  \[
    \left\{ \gamma_j: 1 \leqslant j \leqslant L \right\} =
    \left\{
      i+j\alpha: \,
      i, j\in \mathbb N, \,
      i \leqslant n-1, \, 1 \leqslant j \leqslant 1+(n-1)\gamma_J
    \right\}.
  \]
  Moreover, for $ 1 \leqslant j \leqslant J $, the value of $ d_j $ only depends
  on $ c_0 $, $ c_1 $, $ \dots $, $ c_{j-1} $, and $ f $ (more precisely, $
  \partial_\beta f(0, c_0) $, $ \beta \in \Lambda_s $, $ 1 \leqslant s \leqslant
  n-1 $). Obviously, there exist(s) uniquely $ c_1 $, $ c_2 $, $ \dots $, $ c_J
  $ such that \cref{eq:I_1} holds, and hence $ c_1 $, $ c_2 $, $ \dots $, $ c_J
  $ are/is well-defined. Furthermore, if $ \gamma_J + \alpha - m > 0 $, then
  \[
    Q - S \in C^{ m, \gamma_J+\alpha-m }[0, a];
  \]
  and if $ \gamma_J + \alpha -m = 0 $, then
  \[
    Q - S \in C^{ m, \alpha }[0, a].
  \]
\end{rem}

\begin{rem}
  Note that, $ S $ only depends on $ c_0 $ and
  \[
    \left\{
      \partial_\beta f(0, c_0): \,
      \beta \in \Lambda_s, \, 0 \leqslant s < n
    \right\}.
  \]
  Since $ c_0 $ and $ f $ are already available, we can obtain $ S $ by a simple
  calculation.
\end{rem}

Define
\[
  h^* := \min\left\{
    a,
    \left( \frac{ b\Gamma(1+\alpha) }M \right)^\frac1\alpha
  \right\}.
\]
By \cite[Theorem 2.2]{Diethelm;2002} we know that there exists a unique solution
$ y^* \in C[0, h^*] $ to \cref{eq:model}. Now we state the most important result
of this paper in the following theorem.
\begin{thm}
  \label{thm:main}
  There exist two positive constant $ C_0 $ and $ C_1 $ that only depends on $ a
  $, $ \alpha $ and $ M $, such that, for any $ 0 < h \leqslant h^* $ and $ K >
  0 $ such that
  \[
    \nm{(Q-S)'}_{ C^{m-1}[0, h] } + C_1 h^\alpha + C_0 h^\alpha \sum_{j=1}^m K^j
    \leqslant K,
  \]
  we have $ y^* - S \in C^m[0, h] $ and
  \begin{equation}
    \label{eq:main}
    \nm{(y^*-S)'}_{ C^{m-1}[0, h] } \leqslant K.
  \end{equation}
\end{thm}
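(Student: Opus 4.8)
The plan is to analyze the solution $y^*$ through its Volterra integral representation \cref{eq:model2}, and to show that subtracting the explicitly constructed function $S$ removes exactly the non-smooth behavior near the origin, leaving a $C^m$ remainder. The guiding idea is that $S$ is designed so that its associated fractional integral reproduces the leading singular terms $x^{i+j\alpha}$ with exponent below $m$ in a Taylor-type expansion of the integral operator applied to $f(t,y^*(t))$. Indeed, the function $Q$ in \cref{eq:def-Q} is precisely a truncated expansion obtained by iterating the integral equation: substituting the ansatz $y^* \approx S$ into $f$, expanding $f$ to order $n$ via its Taylor polynomial at $(0,c_0)$ (the sums over $\Lambda_s$ and $\partial_\beta f$ encode the multivariate Taylor coefficients, with the two-valued index $\beta_k$ distinguishing the ``constant'' contribution from the $x$-derivative direction versus the $y$-derivative direction acting on $S'$), and then applying $J^\alpha$ repeatedly. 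The defining relation \cref{eq:I_1} chooses the coefficients $c_1,\dots,c_J$ so that $Q$ and $S$ agree modulo terms of order $\geqslant m$, which are automatically $C^m$.

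First I would set up the fixed-point framework: define the operator $T$ by $(Tv)(x) := c_0 + J^\alpha f(x, v(x))$, so that $y^*$ is its unique fixed point on $[0,h^*]$ by \cite[Theorem 2.2]{Diethelm;2002}. I would then introduce the affine change of unknown $w := v - S$ and study the induced map $v \mapsto S + w$, seeking a fixed point in the closed ball $\{ w \in C^m[0,h] : \nm{w'}_{C^{m-1}[0,h]} \leqslant K \}$. The norm condition in the hypothesis is exactly the condition needed for $T$ (in the shifted coordinate) to map this ball into itself: the term $\nm{(Q-S)'}_{C^{m-1}[0,h]}$ accounts for the fixed ``source'' discrepancy between the constructed $Q$ and $S$, the term $C_1 h^\alpha$ bounds the fractional-integral contribution of the lowest-order data, and the polynomial $C_0 h^\alpha \sum_{j=1}^m K^j$ bounds the nonlinear feedback from composing $f$ with $S+w$ and differentiating up to order $m$, where each derivative of the composite produces products of up to $m$ factors of $w'$ and its derivatives (hence powers $K^j$), each gaining a factor $h^\alpha$ from the smoothing of $J^\alpha$.

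The technical heart is a mapping lemma for $J^\alpha$ acting on functions of the form $g(x) + (\text{smooth}) \cdot x^{\text{fractional}}$: I would prove that $J^\alpha$ raises the differentiability index appropriately, namely that applying $J^\alpha$ to a $C^{m-1}$ function, then differentiating $m$ times, gains a factor $C h^\alpha$ in the relevant norm, together with the exact cancellation of the singular monomials. Concretely, one verifies $D^m J^\alpha (x^{\gamma}) = \frac{\Gamma(1+\gamma)}{\Gamma(1+\gamma+\alpha-m)} x^{\gamma+\alpha-m}$, which is bounded on $[0,h]$ precisely when $\gamma + \alpha \geqslant m$; the monomials with $\gamma + \alpha < m$ are the ones collected into $S$ and removed. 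Establishing that the Taylor expansion of $f(t, S(t)+w(t))$, integrated against the fractional kernel, produces $Q$ plus a genuinely $C^m$ error uniformly controlled by $K$ on the ball is the main obstacle, since it requires careful bookkeeping of the multivariate chain rule (the $\Lambda_s$/$\partial_\beta$ formalism) and uniform control of the Taylor remainder using the Assumption's bound $M$.

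Once the self-mapping property is in hand, I would close the argument by a contraction estimate on the same ball (shrinking $h$ if necessary within $h \leqslant h^*$, or invoking uniqueness of the fixed point from \cite{Diethelm;2002} directly) to identify the fixed point with $y^*$, and conclude $y^* - S = w \in C^m[0,h]$ with $\nm{(y^*-S)'}_{C^{m-1}[0,h]} \leqslant K$, which is \cref{eq:main}. The constants $C_0$ and $C_1$ emerge from the mapping lemma and depend only on $a$, $\alpha$, and $M$ as claimed, since all estimates are uniform over the ball and over $h \leqslant a$.
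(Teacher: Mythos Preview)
Your overall strategy---pass to the shifted unknown $w=y-S$, show the associated integral operator maps a $C^m$-ball of radius $K$ into itself, and then identify the fixed point with $y^*-S$ via the uniqueness in \cite{Diethelm;2002}---is exactly the paper's. The decomposition you describe (iterated Taylor/chain-rule expansion of $f(t,S(t)+w(t))$ producing $Q$ plus controllable remainders) is realized in the paper as an \emph{exact} identity, \cref{lem:basic}, obtained by repeatedly applying the fundamental theorem of calculus; the remainders are the three operators $\mathcal P_{1,h},\mathcal P_{2,h},\mathcal P_{3,h}$, and the constants $C_0,C_1$ come from the $C^m$-bounds on these in \cref{lem:P_1,lem:P_2-P_3}. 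So the ``technical heart'' you anticipate is precisely these lemmas.

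There is, however, a genuine gap in how you plan to obtain the fixed point. Self-mapping alone does not produce a fixed point, and your two proposed closures both have problems. A contraction estimate on the $C^m$-ball is \emph{not} what the hypothesis provides: the inequality $\nm{(Q-S)'}_{C^{m-1}}+C_1h^\alpha+C_0h^\alpha\sum K^j\leqslant K$ is exactly the self-mapping condition, and says nothing about a Lipschitz constant below $1$; moreover the theorem is stated for \emph{every} $(h,K)$ satisfying that inequality, so you are not free to shrink $h$ further. Your alternative, ``invoking uniqueness from \cite{Diethelm;2002} directly,'' gives uniqueness in $C[0,h]$ but no existence of a fixed point in the $C^m$-ball. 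The paper closes this gap differently: in \cref{lem:J} it shows the operator maps the ball not merely into $C^m$ but into $C^{m,\gamma}$ for some $\gamma>0$ with a uniform bound, so by Arzel\`a--Ascoli the operator is compact on the ball, and Schauder's fixed-point theorem yields a fixed point $z\in V$. Only \emph{then} does uniqueness from \cite{Diethelm;2002} identify $z+S$ with $y^*$. If you want your contraction route to work you would have to enlarge $C_0$ (and prove a separate Lipschitz estimate for the shifted operator in $C^m$), which is possible but is extra work the Schauder argument avoids.
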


\begin{corollary}
  \label{coro:main}
  There exists $ 0 < h \leqslant h^* $ such that $ y^* \in C^m[0, h] $ if, and
  only if,
  \begin{equation}
    \label{eq:cond-f}
    \frac{\partial^i}{ \partial x^i } f(0, c_0) = 0
    \quad \text{ for all $ 0 \leqslant i < m $. }
  \end{equation}
\end{corollary}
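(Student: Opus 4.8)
The plan is to reduce the statement to a purely algebraic condition on the coefficients $c_1,\dots,c_J$ defining $S$, and then read those coefficients off from the recursion implicit in \eqref{eq:I_1}. First I would use \cref{thm:main} to dispose of the regular part of the solution: since $\nm{(Q-S)'}_{C^{m-1}[0,h]}$ stays bounded as $h$ decreases while $h^\alpha\to0$, I can fix a $K$ slightly larger than $\nm{(Q-S)'}_{C^{m-1}[0,a]}$ and then an $h>0$ small enough that the hypothesis of \cref{thm:main} holds, yielding $y^*-S\in C^m[0,h]$. Consequently $y^*\in C^m[0,h]$ if and only if $S\in C^m[0,h]$. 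Because $S-c_0=\sum_{j=1}^J c_j x^{\gamma_j}$ with every $0<\gamma_j<m$, and because $x^{\gamma}$ fails to be $m$ times continuously differentiable at the origin exactly when $\gamma$ is a non-integer less than $m$, the problem becomes: show that $c_j=0$ for every $j$ with $\gamma_j\notin\mathbb N$ if and only if \eqref{eq:cond-f} holds.

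Next I would extract the $c_j$ recursively. Matching the coefficient of each $x^{\gamma_j}$ in \eqref{eq:I_1} shows that $c_j$ equals the coefficient of $x^{\gamma_j}$ in $Q$, and, as noted in the first remark above, this coefficient depends only on $c_0,\dots,c_{j-1}$. The computational heart of the argument is the claim that if $c_1=\dots=c_{j-1}=0$, then the only multi-index $\beta\in\Lambda_s$ contributing to the coefficient of $x^{\gamma_j}$ in $Q$ is the all-ones index $\beta=(1,\dots,1)$. Indeed, for that index every inner integrand equals $1$, so the $s$-fold iterated integral produces $\frac{1}{\Gamma(s+1+\alpha)}\,\frac{\partial^s f}{\partial x^s}(0,c_0)\,x^{s+\alpha}$, whereas any $\beta$ having some $\beta_k=2$ inserts a factor $S'$, whose lowest surviving power is $\gamma_j-1$ once $c_1,\dots,c_{j-1}$ vanish, and this forces the exponent of the corresponding term strictly above $\gamma_j$. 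Granting the claim, $c_j=\frac{1}{\Gamma(k+1+\alpha)}\frac{\partial^k f}{\partial x^k}(0,c_0)$ when $\gamma_j=k+\alpha$ for some $k\in\mathbb N$, and $c_j=0$ otherwise.

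With this recursion in hand both implications are short. For sufficiency, assume \eqref{eq:cond-f}; a strong induction on $j$ shows $c_j=0$ for every $j$, since if all earlier coefficients vanish then $c_j$ is either $0$ or $\frac{1}{\Gamma(k+1+\alpha)}\frac{\partial^k f}{\partial x^k}(0,c_0)$ with $k=\gamma_j-\alpha<m$, and the latter vanishes by hypothesis. Hence $S\equiv c_0$ and $y^*\in C^m[0,h]$. For necessity I argue by contraposition: if \eqref{eq:cond-f} fails, let $i_0$ be the least $i<m$ with $\frac{\partial^i f}{\partial x^i}(0,c_0)\neq0$. The same induction gives $c_j=0$ for every $\gamma_j<i_0+\alpha$, and then at $\gamma_{j_0}=i_0+\alpha$, which is non-integer and satisfies $0<\gamma_{j_0}<m$, we obtain $c_{j_0}=\frac{1}{\Gamma(i_0+1+\alpha)}\frac{\partial^{i_0} f}{\partial x^{i_0}}(0,c_0)\neq0$. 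Thus $S$ carries a genuine non-integer power and $S\notin C^m[0,h]$ for every $h>0$; combined with $y^*-S\in C^m[0,h]$ on the small interval above, this precludes $y^*\in C^m[0,h]$ for any $h\leqslant h^*$.

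The hard part will be the claim in the second paragraph, namely the bookkeeping that isolates the all-ones multi-index. Concretely, one must show that inserting even a single factor $S'$ raises the exponent: a level with $\beta_k=1$ integrates the constant $1$ and raises the running power by $1$, while a level with $\beta_k=2$ integrates $S'$ (leading power $\gamma_j-1$) and raises it by $\gamma_j$, so a $\beta$ with $p\geqslant1$ twos among $s$ factors yields leading exponent $(s-p)+p\gamma_j+\alpha$, and $(s-p)+p\gamma_j+\alpha-\gamma_j=(s-p)+(p-1)\gamma_j+\alpha>0$. A companion point, needed for the recursion to be well posed, is that $c_j$ never appears in its own defining coefficient: any term in which $S'$ supplies the power $\gamma_j-1$ lands at exponent at least $\gamma_j+\alpha>\gamma_j$. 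Once these elementary but slightly delicate exponent estimates are in place, the rest is automatic.
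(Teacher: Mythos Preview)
Your argument is correct and follows essentially the same route as the paper: reduce via \cref{thm:main} to showing that $c_j=0$ for every $j$ with $\gamma_j\notin\mathbb N$ is equivalent to \eqref{eq:cond-f}, and then exploit the recursion for the $c_j$ through the key observation that, once $c_1=\dots=c_{j-1}=0$, only the all-ones multi-indices $\beta=(1,\dots,1)$ contribute to the coefficient of $x^{\gamma_j}$ in $Q$. The only difference is organisational: for necessity the paper first proves (by contradiction) that vanishing of the $c_j$ at non-integer exponents forces vanishing of \emph{all} $c_j$ and then reads off \eqref{eq:cond-f} from the surviving pure-$x$ terms, whereas you argue the contrapositive directly by locating the least $i_0$ with $\frac{\partial^{i_0}}{\partial x^{i_0}}f(0,c_0)\neq 0$ and exhibiting a nonzero $c_{j_0}$ at the non-integer exponent $\gamma_{j_0}=i_0+\alpha$.
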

\begin{rem}
  \cref{coro:main} states that $ y^* \in C^1[0, h] $ for some $ 0 < h \leqslant
  h^* $ if and only if $ f(0, c_0) = 0 $. So we only have $ y^* \in C[0, h]
  \setminus C^1[0, h] $, if $ f(0, c_0) \neq 0 $. This yields great difficulty
  in developing high order numerical methods for \cref{eq:model}, although $ y^*
  \in C^m(0, h] $. Many numerical methods for \cref{eq:model} may not even
  converge theoretically, since they require that $ y^* \in C^m[0, h] $ for some
  positive integer $ m $. However, we can obtain the numerical values of $ y^* $
  at some left-most nodes by solving the following problem ($ y^* = y + S $):
  seek $ y \in C^m[0,\widetilde h] $ such that
  \[
    y(x) = c_0 - S(x) +
    \frac 1{ \Gamma(\alpha) } \int_0^x (x-t)^{\alpha-1} f\big( t, y(t)+S(t) \big)
    \, \mathrm{d}t, \quad 0 \leqslant x \leqslant \widetilde h,
  \]
  where $ \widetilde h \ll h $. Then we start the numerical methods for
  \cref{eq:model}.

\end{rem}

\begin{rem}
  Assuming that $ f $ satisfies $ f(x, c_0) = 0 $ for all $ 0 \leqslant x
  \leqslant a $, it is easy to see that
  \[
    c_i = 0 \quad \text{for all $ 1 \leqslant i \leqslant J $},
  \]
  and hence $ S = c_0 $. Then \cref{thm:main} implies $ y^* \in C^m[0, h] $.
  Actually, in this case, it is easy to see that $ y^* = c_0 $.
\end{rem}

\begin{rem}
  Put
  \[
    \Theta := \left\{
      1 \leqslant j \leqslant J:
      \gamma_j \not \in \mathbb N
    \right\}.
  \]
  Obviously,
  \[
    \sum_{ j \in \Theta } c_j x^{\gamma_j}
  \]
  is the singular part (compared to the $ C^m $ regularity) in $ S $, and thus
  the singular part in $ y^* $. \cref{coro:main} essentially claims that
  \cref{eq:cond-f} holds if and only if $ c_j = 0 $ for all $ j \in \Theta $.
  Since \cref{eq:cond-f} is rare, we can consider singularity as an intrinsic
  property of solutions to fractional differential equations. In addition, we
  have the following result: that $ c_j = 0 $ for all $ 1 \leqslant j \leqslant
  J $ is equivalent to that $ c_j = 0 $ for all $ j \in \Theta $. This is
  contained in the proof of \cref{coro:main} in \cref{ssec:coro}.
\end{rem}

\section{Proofs}
\label{sec:proof}
Let $ 0 < h < \infty $. For any $ k \in \mathbb N $ and $ \gamma \in [0,1] $,
define
\begin{align}
  \mathcal C^{k,\gamma}[0, h] & :=
  \left\{
    v \in C^{k,\gamma}[0, h]:
    v^{(j)}(0) = 0,\quad j = 0, 1, 2, \dots, k
  \right\}, \\
  \widehat{\mathcal C}^{k,\gamma}[0, h] & :=
  \left\{
    v \in \mathcal C^{k,\gamma}[0, h]:
    \nm{v+S-c_0}_{C[0, h]} \leqslant b
  \right\}.
\end{align}
In particular, we use $ \mathcal C^k[0, h] $ and $ \widehat{\mathcal C}^k[0, h]
$ to abbreviate $ \mathcal C^{k,0}[0, h] $ and $ \widehat{\mathcal C}^{k,0}[0,
h] $ respectively for $ k \in \mathbb N_{>0} $, and use $ \mathcal C[0, h] $ and
$ \widehat{\mathcal C}[0, h] $ to abbreviate $ \mathcal C^0[0, h] $ and $
\widehat{\mathcal C}^0[0, h] $ respectively. In addition, for a function $ v $
defined on $ (0, h] $ with $ h > 0 $, by $ v \in \mathcal C^{k,\gamma}[0, h] $
we mean that, setting $ v(0) := 0 $, the function $ v $ belongs to $ \mathcal
C^{k,\gamma}[0, h] $.

In the remainder of this paper, unless otherwise specified, we use $ C $ to
denote a positive constant that only depends on $ \alpha $, $ a $ and $ M $, and
its value may differ at each occurrence. By the definitions of $ c_1 $, $ c_2 $,
$ \dots $, $ c_J $, it is easy to see that $ \snm{c_j} \leqslant C $ for all $ 1
\leqslant j \leqslant J $, and we use this implicitly in the forthcoming
analysis.

\subsection{Some Auxiliary Results}
We start by introducing some operators. For $ 0 < h \leqslant a $, define $
\mathcal P_{1, h}: \widehat{\mathcal C}^m[0, h] \to \mathcal C[0, h] $, $ \mathcal
P_{2, h}: \widehat{\mathcal C}^m[0, h] \to \mathcal C[0, h] $, and $ \mathcal
P_{3, h}: \widehat{\mathcal C}^m[0, h] \to \mathcal C[0, h] $, respectively, by
\begin{align}
  \mathcal P_{1, h} z(x) & :=
  \frac1{\Gamma(\alpha)}
  \int_0^x (x-t)^{\alpha-1} \mathcal G_{1, h} z(t) \, \mathrm{d}t,
  \label{eq:def-P_1} \\
  \mathcal P_{2, h}z(x) & :=
  \frac1{\Gamma(\alpha)}
  \int_0^x (x-t)^{\alpha-1} \mathcal G_{2, h} z(t) \, \mathrm{d}t,
  \label{eq:def-P_2} \\
  \mathcal P_{3, h} z(x) & :=
  \frac1{\Gamma(\alpha)}
  \int_0^x (x-t)^{\alpha-1} \mathcal G_{3, h} z(t) \, \mathrm{d}t,
  \label{eq:def-P_3}
\end{align}
for all $ z \in \widehat{\mathcal C}^m[0, h] $, where $ \mathcal G_{1, h} z $, $
\mathcal G_{2, h} z $, $ \mathcal G_{3, h} z \in \mathcal C[0, h] $ are given
respectively by
\begin{align}
  \mathcal G_{1, h} z(t_0) &:=
  \sum_{s=1}^n \sum_{ \substack{\beta\in\Lambda_s \\ \beta_s=2} }
  \prod_{k=1}^{s-1} \int_0^{t_{k-1}}
  \frac{ 1+(-1)^{\beta_k+1} } 2 +
  \frac{ 1+(-1)^{\beta_k} } 2 \sum_{j=1}^J \gamma_j c_j t_k^{\gamma_j-1}
  \,\mathrm{d} t_k \notag \\
  & \qquad\qquad\qquad\quad \int_0^{t_{s-1}} z'(t_s) \partial_\beta f
  \big( t_s, z(t_s)+S(t_s) \big) \, \mathrm{d}t_s,
  \label{eq:def-G_1} \\
  \mathcal G_{2, h} z(t_0) &:=
  \sum_{ \substack{\beta\in\Lambda_n\\\beta_n=2} }
  \prod_{k=1}^{n-1} \int_0^{t_{k-1}}
  \frac{1+(-1)^{\beta_k+1}}2 +
  \frac{ 1+(-1)^{\beta_k} }2 \sum_{j=1}^J \gamma_j c_j t_k ^{\gamma_j -1}
  \,\mathrm{d}t_k \notag \\
  & \qquad\qquad\qquad \int_0^{t_{n-1}} \partial_\beta f\big( t_n , z(t_n)+S(t_n) \big)
  \sum_{j=1}^J \gamma_j c_jt_n^{\gamma_j-1} \, \mathrm{d}t_n,
  \label{eq:def-G_2} \\
  \mathcal G_{3, h} z(t_0) &:=
  \sum_{ \substack{ \beta \in \Lambda_n \\ \beta_n =1 } }
  \prod_{k=1}^{n-1} \int_0^{t_{k-1}}
  \frac{ 1+(-1)^{\beta_k +1} }2 +
  \frac{ 1+(-1)^{\beta_k} }2 \sum_{j=1}^J \gamma_j c_j t_k ^{\gamma_j -1}
  \,\mathrm{d} t_k \notag \\
  & \qquad\qquad\qquad \int_0^{t_{n-1}} \partial_\beta f
  \big( t_n, z(t_n)+S(t_n) \big) \, \mathrm{d}t_n,
  \label{eq:G_3}
\end{align}
for all $ 0 \leqslant t_0 \leqslant h $.

Then let us present the following important results for the above operators.
\begin{lem}
  \label{lem:basic}
  Let $ 0 < h \leqslant a $. For any $ z \in \widehat{\mathcal C}^m [0, h] $,
  we have
  \begin{equation}
    \label{eq:basic}
    \frac1{ \Gamma(\alpha) } \int_0^x (x-t)^{\alpha-1}
    f\big( t, z(t)+S(t) \big) \, \mathrm{d}t =
    Q(x) + \mathcal P_{1, h}z(x) + \mathcal P_{2, h}z(x) +
    \mathcal P_{3, h}z(x)
  \end{equation}
  for all $ 0 \leqslant x \leqslant h $.
\end{lem}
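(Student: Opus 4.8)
The plan is to expand the integrand $ f\big(t_0, z(t_0)+S(t_0)\big) $ by repeatedly applying the fundamental theorem of calculus along the curve $ t \mapsto (t, u(t)) $, where $ u := z + S $, and then to apply $ \frac1{\Gamma(\alpha)}\int_0^x (x-t_0)^{\alpha-1}(\cdot)\,\mathrm{d}t_0 $ to the resulting pointwise identity term by term. First I would record the elementary facts that $ u(0) = c_0 $ (since $ z(0) = 0 $ and $ S(0) = c_0 $) and that $ z \in \widehat{\mathcal C}^m[0,h] $ guarantees $ (t, u(t)) \in [0,a]\times[c_0-b, c_0+b] $, so that every $ \partial_\beta f(t, u(t)) $ with $ \beta \in \Lambda_s $, $ s \leqslant n $, is well-defined and continuous on $ [0, h] $. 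I would also isolate the two arithmetic identities underlying the whole construction: the factor $ \frac{1+(-1)^{\beta_k+1}}2 + \frac{1+(-1)^{\beta_k}}2 \sum_{j=1}^J \gamma_j c_j t_k^{\gamma_j-1} $ equals $ 1 $ when $ \beta_k = 1 $ and equals $ S'(t_k) $ when $ \beta_k = 2 $, and $ \frac{\mathrm d}{\mathrm dt}\partial_\beta f(t, u(t)) = \partial_{(\beta,1)} f(t, u(t)) + u'(t)\,\partial_{(\beta,2)} f(t, u(t)) $, the new index being appended on the outside in accordance with the convention for $ \partial_\beta $.

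The core of the argument is an induction on the number of rounds of this expansion. At round $ r $ I differentiate each surviving term $ \partial_{\beta'} f(t, u(t)) $ with $ \beta' \in \Lambda_{r-1} $, peel off the boundary value $ \partial_{\beta'} f(0, c_0) $ via the fundamental theorem of calculus, and split the remaining integrand using $ u'(t) = z'(t) + S'(t) $: the $ S' $-branch carries the selector factor above and \emph{continues} the recursion with a longer multi-index, while the $ z' $-branch is \emph{frozen} into a term whose innermost integral is $ \int_0^{t_{r-1}} z'(t_r)\,\partial_\beta f(t_r, u(t_r))\,\mathrm dt_r $ with $ \beta_r = 2 $. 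Carrying this out for exactly $ n $ rounds, the boundary values accumulate precisely into $ \sum_{s=0}^{n-1}\sum_{\beta\in\Lambda_s} \partial_\beta f(0,c_0)\,(\text{nested selector integrals}) $, the frozen branches accumulate into $ \mathcal G_{1,h}z $ (the levels $ s = 1, \dots, n $), and the terms surviving the final round split according to the innermost selector into the $ \beta_n = 1 $ part, which is $ \mathcal G_{3,h}z $, and the $ \beta_n = 2 $ part, whose innermost weight is $ S'(t_n) = \sum_j \gamma_j c_j t_n^{\gamma_j-1} $, which is $ \mathcal G_{2,h}z $.

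Finally I would apply $ \frac1{\Gamma(\alpha)}\int_0^x (x-t_0)^{\alpha-1}(\cdot)\,\mathrm dt_0 $ to the whole identity: by \cref{eq:def-Q} the boundary part becomes $ Q(x) $, and by \cref{eq:def-P_1,eq:def-P_2,eq:def-P_3} the three remaining parts become $ \mathcal P_{1,h}z(x) $, $ \mathcal P_{2,h}z(x) $ and $ \mathcal P_{3,h}z(x) $, which is exactly \cref{eq:basic}.

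The step I expect to be most delicate is the justification of the repeated use of the fundamental theorem of calculus, because $ S'(t) = \sum_j \gamma_j c_j t^{\gamma_j-1} $ is singular at $ t = 0 $ whenever some $ \gamma_j < 1 $, so $ u $ need not be continuously differentiable there. The point to check is that each $ t \mapsto \partial_{\beta'} f(t, u(t)) $ with $ |\beta'| \leqslant n-1 $ is absolutely continuous on $ [0, h] $: this holds because $ u $ is absolutely continuous (each $ t^{\gamma_j} $ with $ \gamma_j > 0 $ is, its derivative having the integrable singularity $ t^{\gamma_j-1} $) and $ \partial_{\beta'} f $ is $ C^1 $, hence locally Lipschitz, on the relevant compact set. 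With absolute continuity in hand the fundamental theorem of calculus is legitimate at every round, and all the nested integrals converge since each $ t_k^{\gamma_j-1} $ is integrable; the remaining work is the purely combinatorial matching of multi-indices described above.
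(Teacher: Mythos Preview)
Your proposal is correct and follows essentially the same approach as the paper's proof: both arguments repeatedly apply the chain rule/fundamental theorem of calculus to $\partial_\beta f(t, z(t)+S(t))$ along the curve $t\mapsto(t,u(t))$, split $u'=z'+S'$, and match the resulting pieces to $Q$, $\mathcal G_{1,h}z$, $\mathcal G_{2,h}z$, $\mathcal G_{3,h}z$. The only technical difference is in handling the singularity of $S'$ at $t=0$: the paper writes the identity on $[\epsilon,t_s]$ and passes to the limit $\epsilon\to0+$, whereas you invoke absolute continuity of $u$ and Lipschitz continuity of $\partial_{\beta'}f$ to justify the fundamental theorem of calculus directly on $[0,t_s]$; both devices are valid and lead to the same combinatorial expansion.
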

\begin{proof}
  Let $ \beta \in \Lambda_s $ with $ 1 \leqslant s < n $. For any $ 0 < t_s
  \leqslant h $, applying the fundamental theorem of calculus yields
  \begin{align*}
    \partial_\beta f \big( t_s , z(t_s)+S(t_s) \big) = {} &
    \partial_\beta f \big( \epsilon, z(\epsilon) + S(\epsilon) \big) +
    \int_\epsilon^{t_s} \partial_{ \widetilde \beta } f \big(
      t_{s+1}, z(t_{s+1})+S(t_{s+1})
    \big) \, \mathrm{d} t_{s+1} + {} \\
    & \int_\epsilon^{t_s} \left(
      z'(t_{s+1}) +
      \sum_{j=1}^J \gamma_j c_j t_{s+1}^{\gamma_j-1}
    \right)
    \partial_{ \overset{\thickapprox}\beta } f\big(
      t_{s+1}, z(t_{s+1})+S(t_{s+1})
    \big) \, \mathrm{d} t_{s+1}
  \end{align*}
  for all $ 0 < \epsilon \leqslant t_s $, where $ \widetilde\beta :=
  (\beta_1,\beta_2, \dotsc, \beta_s, 1) $ and $ \overset{ \thickapprox }\beta :=
  ( \beta_1,\beta_2,\dotsc,\beta_s,2 ) $. Taking limits on both sides of the
  above equation as $ \epsilon $ approaches $ {0+} $, we obtain
  \begin{align*}
    \partial_\beta f\big( t_s, z(t_s)+S(t_s) \big) = {} &
    \partial_\beta f\big( 0, c_0) \big) +
    \int_0^{t_s} \partial_{\widetilde\beta} f\big(
      t_{s+1}, z(t_{s+1})+S(t_{s+1})
    \big) \, \mathrm{d} t_{s+1} + {} \\
    & \int_0^{t_s} \left(
      z'(t_{s+1}) +
      \sum_{j=1}^J \gamma_j c_j t_{s+1}^{\gamma_j-1}
    \right)
    \partial_{ \overset{\thickapprox}\beta } f\big(
      t_{s+1}, z(t_{s+1})+S(t_{s+1})
    \big) \, \mathrm{d} t_{s+1}.
  \end{align*}
  Using this equality repeatedly, we easily obtain \cref{eq:basic}. This
  completes the proof.
\end{proof}

\begin{lem}
  \label{lem:P_1}
  Let $ 0 < h \leqslant a $. For any $ z \in \widehat{\mathcal C}^m[0, h] $, we
  have $ \mathcal P_{1, h} z \in \mathcal C^{m,\alpha}[0, h] $ and
  \begin{align}
    \nm{ (\mathcal P_{1, h} z)' }_{ C^{m-1}[0, h] }
    \leqslant Ch^\alpha \sum_{j=1}^m \nm{z'}_{ C^{m-1}[0, h] }^j,
    \label{eq:P_1-1} \\
    \snm{ (\mathcal P_{1, h} z)^{(m)} }_{ C^{0,\alpha}[0, h] }
    \leqslant C \sum_{j=1}^m \nm{z'}_{C^{m-1}[0, h]}^j.
    \label{eq:P_1-2}
  \end{align}
\end{lem}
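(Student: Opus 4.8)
The plan is to exploit the factorization $ \mathcal P_{1, h} = J^\alpha \circ \mathcal G_{1, h} $, where $ J^\alpha w(x) := \frac{1}{\Gamma(\alpha)} \int_0^x (x-t)^{\alpha-1} w(t) \, \mathrm{d}t $ is the Riemann--Liouville integral appearing in \cref{eq:def-P_1}. I would first record the two standard properties of $ J^\alpha $ that carry the whole estimate: (i) if $ w \in \mathcal C^k[0, h] $, then $ J^\alpha $ commutes with differentiation, $ (J^\alpha w)^{(j)} = J^\alpha(w^{(j)}) $ for $ 0 \leqslant j \leqslant k $, precisely because each $ w^{(j)} $ vanishes at the origin; and (ii) $ \nm{J^\alpha w}_{C[0, h]} \leqslant \frac{h^\alpha}{\Gamma(1+\alpha)} \nm{w}_{C[0, h]} $ together with the scale-invariant bound $ \snm{J^\alpha w}_{C^{0,\alpha}[0, h]} \leqslant C \nm{w}_{C[0, h]} $. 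Granting these, \cref{lem:P_1} reduces to the single claim that
\[
  g := \mathcal G_{1, h} z \in \mathcal C^m[0, h],
  \qquad
  \nm{g^{(j)}}_{C[0, h]} \leqslant C \sum_{i=1}^m \nm{z'}_{C^{m-1}[0, h]}^i
  \quad (0 \leqslant j \leqslant m) .
\]
Indeed, property (i) then gives $ (\mathcal P_{1, h} z)^{(j)} = J^\alpha(g^{(j)}) $ for $ 0 \leqslant j \leqslant m $; property (ii) applied for $ j = 1, \dots, m $ yields \cref{eq:P_1-1}, while applying it to $ g^{(m)} \in C[0, h] $ shows $ \mathcal P_{1, h} z \in \mathcal C^{m,\alpha}[0, h] $ and gives \cref{eq:P_1-2}.

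By linearity it suffices to treat a single summand of $ g $, indexed by $ s $ and $ \beta \in \Lambda_s $ with $ \beta_s = 2 $: a nested integral whose outer factors (levels $ k = 1, \dots, s-1 $) equal $ 1 $ when $ \beta_k = 1 $ and equal $ S'(t_k) = \sum_{j=1}^J \gamma_j c_j t_k^{\gamma_j - 1} $ when $ \beta_k = 2 $, and whose innermost integrand is $ z'(t_s) \, \partial_\beta f\big( t_s, z(t_s) + S(t_s) \big) $; see \cref{eq:def-G_1,eq:def-S}. I would differentiate this summand up to $ m $ times: each differentiation either peels off one nested integral via the fundamental theorem of calculus, or, through the Leibniz and Fa\`a di Bruno rules, lands on a singular factor $ S' $ or on the composite $ \partial_\beta f(\cdot, z + S) $. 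Since $ z \in \widehat{\mathcal C}^m[0, h] $ vanishes to order $ m $ at the origin, one has $ \snm{z^{(p)}(t)} \leqslant C \nm{z'}_{C^{m-1}[0, h]} \, t^{m-p} $; the mixed partials of $ f $ are bounded by $ M $ by the Assumption, since their total order never exceeds $ n $; and the constants $ c_j $ are bounded by $ C $. Thus every resulting term is a product of such factors with singular monomials $ t^{\gamma_j - \ell} $ coming from the $ S^{(\ell)} $.

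The heart of the argument is the bookkeeping showing that each such product is nevertheless continuous on $ [0, h] $ and vanishes at $ 0 $. The mechanism is that every singular monomial $ t^{\gamma_j - \ell} $ (worst case $ \gamma_j = \alpha $, exponent $ \alpha - \ell $) is compensated: partly by a high-order-vanishing factor $ z^{(p)} $, and partly by the nested integrations, each of which raises the exponent of its integration variable. Tracking exponents, one finds that in every term the net power of $ t $ is at least $ \alpha > 0 $, the number of compensating $ \alpha $'s growing with the number of singular factors; this is exactly where $ f \in C^n $ and the choice $ m = \max\{ j \in \mathbb N : j < n\alpha \} $ enter, ensuring both that enough derivatives of $ f $ are available and that enough $ \alpha $-gains accumulate. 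The norm bound is then read off from the Fa\`a di Bruno expansion: the degree in the derivatives of $ z $ ranges from $ 1 $ (the explicit innermost factor $ z' $, which forces every term to be at least linear in $ z' $) up to $ m $ (when all derivatives are distributed onto copies of $ (z+S)' $), producing exactly $ \sum_{i=1}^m \nm{z'}_{C^{m-1}[0, h]}^i $.

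The main obstacle is this uniform singularity count. One must verify, simultaneously for all $ s $, all $ \beta \in \Lambda_s $ with $ \beta_s = 2 $, and all derivative orders up to $ m $, that no product of the monomials $ t^{\gamma_j - \ell} $ produced by Leibniz and Fa\`a di Bruno survives with a negative net exponent once the high-order vanishing of the $ z $-derivatives and the exponent gains from the nested integrations have been accounted for. Organizing this count cleanly --- rather than any individual estimate --- is the delicate part; once it is in place, the fractional-integral bounds (i)--(ii) together with the Fa\`a di Bruno expansion deliver \cref{eq:P_1-1,eq:P_1-2} routinely.
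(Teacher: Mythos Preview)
Your reduction is the same as the paper's: \cref{lem:basic-3} is precisely your properties (i)--(ii) of $J^\alpha$, so both arguments come down to showing $\mathcal G_{1,h}z\in\mathcal C^m[0,h]$ with the stated bound. Where you diverge is in how that is established. You propose to differentiate each $\beta$-summand $m$ times directly and bookkeep the exponents produced by Leibniz and Fa\`a di Bruno, relying on the Taylor bound $\snm{z^{(p)}(t)}\leqslant C\nm{z'}_{C^{m-1}}\,t^{m-p}$ to absorb the singular monomials $t^{\gamma_j-\ell}$ coming from $S^{(\ell)}$. The paper instead works from the inside out: it first controls the innermost integral via \cref{lem:lxy-4} (a product $w\cdot\partial_\beta f(\cdot,z+S)$ inherits $\mathcal C^{\min\{k,n-s\}}$ regularity from $w\in\mathcal C^k$), obtaining $g_{s-1}\in\mathcal C^{\min\{m,n-s+1\}}$, and then applies \cref{lem:lxy-3} to each outer layer in turn (each integration against $S'$ raises the H\"older exponent by $\alpha$), reaching $g_0\in\mathcal C^m$ via the single arithmetic check $(n-s+1)+(s-1)\alpha>m$. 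Your global exponent count is in principle equivalent---unrolling the inductive proofs of \cref{lem:lxy-3,lem:lxy-4} and the compensation lemma \cref{lem:lxy-1} would reproduce exactly the cancellations you describe---but the paper's packaging confines the delicate singularity analysis you flag as ``the main obstacle'' to two self-contained appendix lemmas with clean hypotheses, rather than redoing the count across all $s$, $\beta$, and derivative orders at once. Both routes work; the paper's is more modular, yours more direct but with the hard step still to be executed.
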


\begin{lem}
  \label{lem:P_2-P_3}
  Let $ 0 < h \leqslant a $. For any $ z \in \widehat{\mathcal C}^m[0, h] $, we
  have $ \mathcal P_{2, h} z$, $ \mathcal P_{3, h} z \in \mathcal
  C^{m,\alpha}[0, h] $ and
  \begin{align}
    \nm{ ( \mathcal P_{2, h}z )' }_{ C^{m-1}[0, h] } +
    \nm{ ( \mathcal P_{3, h}z )' }_{ C^{m-1}[0, h] }
    \leqslant C h^\alpha,
    \label{eq:P_2-P_3-1} \\
    \snm{ ( \mathcal P_{2, h}z )^{(m)} }_{ C^{0,\alpha}[0, h] } +
    \snm{ ( \mathcal P_{3, h}z )^{(m)} }_{ C^{0,\alpha}[0, h] }
    \leqslant C.
    \label{eq:P_2-P_3-2}
  \end{align}
\end{lem}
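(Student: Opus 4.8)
The plan is to treat both operators through the single relation $\mathcal P_{i,h}z = J^\alpha \mathcal G_{i,h}z$ (for $i = 2, 3$), and to run the argument in close parallel to the proof of \cref{lem:P_1}, exploiting the structural features that distinguish $\mathcal G_{2,h}$ and $\mathcal G_{3,h}$ from $\mathcal G_{1,h}$. The decisive difference is that $\beta \in \Lambda_n$ forces $\partial_\beta f$ to be a top-order, hence merely continuous and non-differentiable, derivative of $f$, while the innermost integral carries no factor $z'$: for $\mathcal G_{3,h}z$ it is $\int_0^{t_{n-1}}\partial_\beta f(t_n, z+S)\,\mathrm dt_n$, and for $\mathcal G_{2,h}z$ it is the same with the extra, $z$-independent, singular weight $S'(t_n) = \sum_{j=1}^J \gamma_j c_j t_n^{\gamma_j-1}$. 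Since $\snm{\partial_\beta f} \leq M$ irrespective of its argument and since this factor is never differentiated, every estimate becomes uniform in $z$; this is exactly why the right-hand sides are the $z$-free quantities $Ch^\alpha$ and $C$, with no power of $\nm{z'}_{C^{m-1}[0,h]}$ appearing (in contrast to \cref{lem:P_1}).

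First I would establish, by an order-of-vanishing (homogeneity) count, that $\mathcal G_{2,h}z$ vanishes at $0$ to order at least $n\alpha$ and $\mathcal G_{3,h}z$ to order at least $1 + (n-1)\alpha$: each of the $n-1$ outer integrations against a factor that is either $1$ or the weight $S' \sim t^{\alpha-1}$ raises the order by at least $\alpha$, while the innermost integration raises it by at least $\alpha$ (for $\mathcal G_{2,h}$) or by exactly $1$ (for $\mathcal G_{3,h}$). Because $0 < \alpha < 1$ and $m < n\alpha$, both orders strictly exceed $m$. I would then verify that $\mathcal G_{2,h}z, \mathcal G_{3,h}z \in C^m[0,h]$ with $C^m[0,h]$-norm bounded by $C$: the point is that differentiating these nested integrals up to $m \leq n-1$ times (note $m \leq n-1$ since $m < n\alpha < n$) removes at most $m$ of the $n$ integrations, so $\partial_\beta f$ always remains under at least one integral and is never exposed to differentiation; the derivatives instead fall on the explicit powers and on $S', S'', \dots$, and although these are individually singular, every resulting term retains homogeneity at least $1+(n-1)\alpha - m > 0$ (respectively $n\alpha - m > 0$), hence is continuous up to $0$ and bounded by $C$ (using $\snm{\partial_\beta f} \leq M$, $\snm{c_j} \leq C$ and $h \leq a$).

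With this in hand I would invoke the smoothing property of the Riemann--Liouville integral. Since $\mathcal G_{i,h}z$ together with its first $m-1$ derivatives vanish at $0$, the commutation $(J^\alpha \mathcal G_{i,h}z)^{(k)} = J^\alpha\big((\mathcal G_{i,h}z)^{(k)}\big)$ holds for $1 \leq k \leq m$. Combining this with the elementary bound $\nm{J^\alpha w}_{C[0,h]} \leq h^\alpha \nm{w}_{C[0,h]}/\Gamma(1+\alpha)$ yields $\nm{(\mathcal P_{i,h}z)^{(k)}}_{C[0,h]} \leq Ch^\alpha$ for each such $k$, and taking the maximum over $k$ gives \cref{eq:P_2-P_3-1}; combining it instead with the classical mapping property $J^\alpha\colon C[0,h] \to C^{0,\alpha}[0,h]$ with $\snm{J^\alpha w}_{C^{0,\alpha}[0,h]} \leq C\nm{w}_{C[0,h]}$ gives $\snm{(\mathcal P_{i,h}z)^{(m)}}_{C^{0,\alpha}[0,h]} \leq C$, which is \cref{eq:P_2-P_3-2}. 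Since all derivatives of $\mathcal P_{i,h}z$ up to order $m$ have the form $J^\alpha(\text{bounded})$ and therefore vanish at $0$, this also delivers $\mathcal P_{2,h}z, \mathcal P_{3,h}z \in \mathcal C^{m,\alpha}[0,h]$.

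The main obstacle is the smoothness bookkeeping of the second paragraph: one must confirm that the $n$-fold iterated integration against the integrable but singular weights $S'(t_k) \sim t^{\alpha-1}$, followed later by $J^\alpha$, genuinely produces $m$ continuous derivatives with an $\alpha$-H\"older $m$-th derivative, rather than stalling at lower regularity because of the singular weights or the non-differentiability of $\partial_\beta f$. The inequalities $m < n\alpha$ and $\alpha < 1$ provide exactly the margin that keeps every intermediate homogeneity exponent positive and shields $\partial_\beta f$ under at least one surviving integration; making this count precise, uniformly over $\beta \in \Lambda_n$ and over the sum defining $S'$, is where the real work lies.
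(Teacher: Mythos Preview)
Your proposal is correct and follows the same overall skeleton as the paper: first show $\mathcal G_{i,h}z\in\mathcal C^m[0,h]$ with $\nm{\mathcal G_{i,h}z}_{C^m[0,h]}\leqslant C$ uniformly in $z$, then invoke the smoothing of $J^\alpha$ (exactly the content of \cref{lem:basic-3}) to obtain \cref{eq:P_2-P_3-1,eq:P_2-P_3-2}.

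The one meaningful difference is how the key regularity claim $\mathcal G_{i,h}z\in\mathcal C^m$ is established. The paper does not argue by a global homogeneity count; instead it packages the step-by-step gain of regularity into \cref{lem:lxy-3}, which says precisely that integrating a $\mathcal C^{k,l\alpha}$ function against the weight $S'$ lands in $\mathcal C^{k,(l+1)\alpha}$ or $\mathcal C^{k+1,(l+1)\alpha-1}$. Starting from the innermost integral $g_{n-1}\in\mathcal C^{0,\alpha}$ and applying \cref{lem:lxy-3} successively to $g_{n-2},\dots,g_0$, together with $n\alpha>m$, yields $g_0\in\mathcal C^m$ with the required bound. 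Your power-counting argument is the informal shadow of this: each application of \cref{lem:lxy-3} raises the ``order'' by at least $\alpha$, and $n$ applications exceed $m$. What \cref{lem:lxy-3} buys is that the H\"older bookkeeping is done once and for all (and is reused in the proof of \cref{lem:P_1}); your direct route would have to reprove essentially that lemma on the spot to justify that ``homogeneity at least $n\alpha-m>0$'' really upgrades to continuity of $g_0^{(m)}$ on the closed interval, not just pointwise bounds on $(0,h]$. Both routes are sound, and you correctly identified this justification as where the real work lies.
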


To prove the above two lemmas, we need several lemmas below.

\begin{lem}
  \label{lem:basic-3}
  Let $ 0 < h \leqslant a $ and $ g \in \mathcal C^m[0, h] $. We have $ w \in
  \mathcal C^{m,\alpha}[0, h] $ and
  \begin{align}
    \nm{w'}_{C^{m-1}[0, h]} & \leqslant Ch^\alpha \nm{g'}_{C^{m-1}[0, h]},
    \label{eq:basic-3-1} \\
    \snm{w^{(m)}}_{C^{0,\alpha}[0, h]} & \leqslant C \nm{g^{(m)}}_{C[0, h]},
    \label{eq:basic-3-2}
  \end{align}
  where
  \[
    w(x) := \int_0^x (x-t)^{\alpha-1} g(t)\,\mathrm{d}t,
    \quad 0 \leqslant x \leqslant h.
  \]
\end{lem}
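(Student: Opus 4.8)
The plan is to reduce everything to one clean identity: because $g$ and its first $m$ derivatives vanish at the origin, the map $g \mapsto w$ commutes with differentiation up to order $m$. First I would substitute $s = x-t$ to rewrite $w(x) = \int_0^x s^{\alpha-1} g(x-s)\,\mathrm{d}s$, which pins the kernel singularity to the fixed endpoint $s = 0$ and transfers all the $x$-dependence into the smooth factor $g(x-s)$ and the moving upper limit. Differentiation under the integral sign is then legitimate, since $s^{\alpha-1} g'(x-s)$ is dominated by the integrable function $s^{\alpha-1}\nm{g'}_{C[0, h]}$, and the Leibniz boundary term equals $x^{\alpha-1} g(0)$, which vanishes because $g(0) = 0$. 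This gives $w'(x) = \int_0^x (x-t)^{\alpha-1} g'(t)\,\mathrm{d}t$. Iterating — valid because $g^{(k-1)}(0) = 0$ kills each boundary term for $k = 1, \dots, m$ — yields the key formula
\[
  w^{(k)}(x) = \int_0^x (x-t)^{\alpha-1} g^{(k)}(t)\,\mathrm{d}t, \qquad k = 0, 1, \dots, m.
\]

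With this formula, \cref{eq:basic-3-1} is immediate: for $1 \leqslant k \leqslant m$, bounding $\snm{g^{(k)}}$ by $\nm{g'}_{C^{m-1}[0, h]}$ and using $\int_0^x (x-t)^{\alpha-1}\,\mathrm{d}t = x^\alpha/\alpha \leqslant h^\alpha/\alpha$ gives $\nm{w^{(k)}}_{C[0, h]} \leqslant (h^\alpha/\alpha)\nm{g'}_{C^{m-1}[0, h]}$, and taking the maximum over $k$ produces the claimed bound.

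For \cref{eq:basic-3-2} I would estimate the Hölder seminorm of $w^{(m)}$ directly. Fixing $0 \leqslant x < y \leqslant h$ and writing $\phi := g^{(m)}$, I split $w^{(m)}(y) - w^{(m)}(x)$ into $\int_x^y (y-t)^{\alpha-1}\phi(t)\,\mathrm{d}t$ and $\int_0^x \bigl[(y-t)^{\alpha-1} - (x-t)^{\alpha-1}\bigr]\phi(t)\,\mathrm{d}t$. The first piece is bounded in absolute value by $\nm{\phi}_{C[0, h]}(y-x)^\alpha/\alpha$. For the second, since $\alpha - 1 < 0$ the bracket keeps a fixed sign on $[0, x]$, so its contribution is at most $\nm{\phi}_{C[0, h]}$ times $\int_0^x \bigl[(x-t)^{\alpha-1} - (y-t)^{\alpha-1}\bigr]\,\mathrm{d}t = \alpha^{-1}\bigl[x^\alpha - y^\alpha + (y-x)^\alpha\bigr]$, and the subadditivity of $s \mapsto s^\alpha$ (that is, $y^\alpha - x^\alpha \leqslant (y-x)^\alpha$) bounds this by $(y-x)^\alpha/\alpha$. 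Summing the two pieces gives $\snm{w^{(m)}}_{C^{0,\alpha}[0, h]} \leqslant (2/\alpha)\nm{g^{(m)}}_{C[0, h]}$, which is \cref{eq:basic-3-2}. The uniform bound $\snm{w^{(m)}(x)} \leqslant (h^\alpha/\alpha)\nm{g^{(m)}}_{C[0, h]} \to 0$ as $x \to 0+$ shows $w^{(m)}(0) = 0$, so together with the vanishing of the lower-order derivatives at $0$ we conclude $w \in \mathcal C^{m,\alpha}[0, h]$.

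The main obstacle is the very first step. Because $\partial_x (x-t)^{\alpha-1} = (\alpha-1)(x-t)^{\alpha-2}$ fails to be integrable at $t = x$, one cannot differentiate the original singular integral naively. The substitution $s = x-t$ (equivalently, one integration by parts replacing the kernel by the non-singular $(x-t)^\alpha$) is what makes the differentiation rigorous, and it is precisely here that the hypothesis $g \in \mathcal C^m[0, h]$ — the vanishing of $g, g', \dots, g^{(m-1)}$ at the origin — is indispensable, since otherwise the boundary terms would blow up like $x^{\alpha-1}$ near $0$. Everything after that identity is elementary and makes no further use of the singular structure.
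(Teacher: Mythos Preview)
Your proof is correct and follows the same route as the paper: both hinge on the identity $w^{(i)}(x) = \int_0^x (x-t)^{\alpha-1} g^{(i)}(t)\,\mathrm{d}t$ for $1 \leqslant i \leqslant m$, from which \cref{eq:basic-3-1} is immediate. The only difference is that for \cref{eq:basic-3-2} the paper simply cites \cite[Theorem 3.1]{Samko;1993} (the standard $C \to C^{0,\alpha}$ mapping property of $J^\alpha$), whereas you reprove it by the usual two-piece splitting---a harmless expansion of the same idea rather than a different approach.
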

\begin{proof}
  Since $ g \in \mathcal C^m[0, h] $ we have
  \[
    w^{(i)}(x) = \int_0^x (x-t)^{\alpha-1} g^{(i)}(t) \, \mathrm{d}t,
    \quad 1 \leqslant i \leqslant m.
  \]
  Then $ w \in \mathcal C^m[0, h] $ and \cref{eq:basic-3-1} follow, and
  \cref{eq:basic-3-2} follows from \cite[Theorem 3.1]{Samko;1993}.  This
  completes the proof.
\end{proof}


\begin{lem}
  \label{lem:lxy-3}
  Let $ 0 < h \leqslant a $, and $ k $, $ l \in \mathbb N $ such that $ k
  \leqslant m $ and $ l\alpha \leqslant 1 $. For any $ g \in \mathcal
  C^{k, l\alpha}[0, h] $, define
  \[
    w(x) := \int_0^x \sum_{j=1}^J \gamma_j c_j t^{\gamma_j-1} g(t)
    \, \mathrm{d}t,
    \quad 0 < x \leqslant h.
  \]
  Then we have the following results:
  \begin{itemize}
    \item If $ (l+1)\alpha \leqslant 1 $, then we have
      $ w \in \mathcal C^{k,(l+1)\alpha}[0, a] $ and
      \[
        \nm{w}_{ \mathcal C^{ k,(l+1)\alpha } } \leqslant C
        \nm{g}_{ \mathcal C^{k, l\alpha} }.
      \]
    \item If $ (l+1)\alpha > 1 $, then we have
      $ w \in \mathcal C^{ k+1,(l+1)\alpha-1 }[0, a] $ and
      \[
        \nm{w}_{ \mathcal C^{ k+1,(l+1)\alpha-1 } } \leqslant C
        \nm{g}_{ \mathcal C^{k, l\alpha} }.
      \]
  \end{itemize}
\end{lem}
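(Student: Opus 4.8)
The plan is to reduce everything to a single power and then track how one integration against $t^{\gamma_j-1}$ improves the behaviour at the origin. Since $J$ is finite and $\snm{c_j}\leqslant C$, it suffices to estimate each summand $w_j(x):=\int_0^x t^{\gamma_j-1}g(t)\,\mathrm{d}t$ and add up. The term with the smallest exponent $\gamma_1=\alpha$ is the least regular, and on $[0,h]$ a higher Hölder exponent (or one extra vanishing derivative) controls a lower one at the cost of a factor $h^{\cdots}\leqslant a^{\cdots}$; hence the binding case is $\beta:=\gamma_j=\alpha$, the surplus for $\gamma_j>\alpha$ being absorbed into the constant. The essential input on $g$ is a family of pointwise decay bounds: from $g\in\mathcal C^{k,l\alpha}[0,h]$ one has $g^{(j)}(0)=0$ for $0\leqslant j\leqslant k$, whence, integrating $\snm{g^{(k)}(t)}\leqslant\snm{g}_{C^{k,l\alpha}}\,t^{l\alpha}$ downward from $0$,
\[
  \snm{g^{(j)}(t)}\leqslant C\,\nm{g}_{\mathcal C^{k,l\alpha}}\,t^{\,k-j+l\alpha},
  \qquad 0\leqslant j\leqslant k .
\]

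Next I would differentiate $w_j$ classically on $(0,h]$. By the fundamental theorem of calculus $w_j'(x)=x^{\beta-1}g(x)$, and by Leibniz
\[
  w_j^{(1+r)}(x)=\sum_{i=0}^{r}\binom{r}{i}\frac{\Gamma(\beta)}{\Gamma(\beta-i)}\,
  x^{\beta-1-i}\,g^{(r-i)}(x).
\]
Using the decay bounds, every summand is $O\!\left(x^{\beta-1+k-r+l\alpha}\right)$, i.e. $O(x^{\beta+l\alpha})$ when $r=k-1$ and $O(x^{\beta-1+l\alpha})$ when $r=k$. Choosing $r=k-1$ in the case $(l+1)\alpha\leqslant1$ and $r=k$ in the case $(l+1)\alpha>1$, the top derivative tends to $0$ as $x\to0+$; a standard fact (a function continuous on $[0,h]$ and $C^1$ on $(0,h]$ whose derivative extends continuously to $0$ is $C^1$ on $[0,h]$), applied iteratively, then shows $w_j\in C^{k}[0,h]$ resp. $C^{k+1}[0,h]$ with all derivatives up to the top order vanishing at $0$, giving the claimed membership in $\mathcal C^{k}$ resp. $\mathcal C^{k+1}$ apart from the Hölder bound.

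The crux is the Hölder estimate of the top derivative, proved term by term for $u(x):=x^{p}\psi(x)$ with $p=\beta-1-i$ and $\psi=g^{(r-i)}$, targeting $\theta:=(l+1)\alpha$ (case 1) or $\theta:=(l+1)\alpha-1$ (case 2); note $0<\theta\leqslant1$ in both, and for $\beta=\alpha$ the decay exponent $s$ of $\psi$ satisfies $p+s=\theta$. For $0\leqslant x<y\leqslant h$ I split according to whether $y-x>x$ or $y-x\leqslant x$. In the \emph{far} regime $y-x>x$ one has $x,y\leqslant2(y-x)$, so $\snm{u(y)-u(x)}\leqslant\snm{u(y)}+\snm{u(x)}\leqslant C\bigl(x^{\theta}+y^{\theta}\bigr)\leqslant C(y-x)^{\theta}$. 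In the \emph{near} regime $y-x\leqslant x$ (so $x\sim y$) one integrates $u'$: for the terms in which $\psi=g^{(r-i)}$ is itself $C^1$ one uses $\snm{u'(t)}\leqslant Ct^{\theta-1}$, giving $\snm{u(y)-u(x)}\leqslant C\int_x^y t^{\theta-1}\,\mathrm{d}t\leqslant Cx^{\theta-1}(y-x)\leqslant C(y-x)^{\theta}$. The one exceptional summand is $i=0$ in case 2, where $\psi=g^{(k)}$ is only $l\alpha$-Hölder; there I would write $u(y)-u(x)=y^{p}\bigl(\psi(y)-\psi(x)\bigr)+\bigl(y^{p}-x^{p}\bigr)\psi(x)$ and bound the two pieces using $\snm{\psi(y)-\psi(x)}\leqslant\snm{g}_{C^{k,l\alpha}}(y-x)^{l\alpha}$, $\snm{\psi(x)}\leqslant Cx^{l\alpha}$, and $\snm{y^{p}-x^{p}}\leqslant\snm{p}\,x^{p-1}(y-x)$.

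The \emph{main obstacle} is precisely this singular factor $x^{\beta-1}$ with $\beta-1\in(-1,0)$: each further differentiation of $w_j$ produces an increasingly singular power at the origin, and the scheme works only because the vanishing of $g$ and its derivatives—quantified by the decay bounds—exactly compensates it, and because the near/far splitting converts the borderline behaviour $x^{\theta-1}$ into the clean estimate $(y-x)^{\theta}$. Summing the finitely many terms in $i$ and $j$ and absorbing the bounded constants $\Gamma(\beta)/\Gamma(\beta-i)$, $\gamma_j$, $c_j$ together with the surplus powers of $h\leqslant a$ then yields the stated norm inequality with $C$ depending only on $\alpha$, $a$ and $M$.
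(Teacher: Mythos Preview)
Your argument is correct and takes a genuinely different route from the paper's. The paper's proof rests on two auxiliary lemmas: first, an integral identity (their Lemma~A.1) which is iterated to give $w^{(i)}(x)=\int_0^x\sum_j\gamma_jc_jt^{\gamma_j-1}g^{(i)}(t)\,\mathrm{d}t$ for $0\leqslant i\leqslant k$, yielding $w\in\mathcal C^k$ with $\|w\|_{C^k}\leqslant C\|g\|_{C^k}$ at once and, in the second case, $w^{(k+1)}(x)=\sum_j\gamma_jc_jx^{\gamma_j-1}g^{(k)}(x)$; second, a product lemma (their Lemma~A.2) showing $\sum_j\gamma_jc_jx^{\gamma_j-1}g^{(k)}(x)\in\mathcal C^{0,(l+1)\alpha-1}$ via a case-by-case analysis according to whether $\gamma_j<1$, $\gamma_j=1$, $1<\gamma_j<2$, or $\gamma_j\geqslant2$. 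In the first case the H\"older bound on $w^{(k)}$ is read directly off the integral representation.

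You instead differentiate $w_j$ pointwise, expand $w_j^{(1+r)}$ by Leibniz, feed in the Taylor decay bounds $|g^{(j)}(t)|\leqslant Ct^{\,k-j+l\alpha}$, and handle the H\"older estimate by the near/far dichotomy $y-x\gtrless x$. This is more elementary and self-contained---no auxiliary product lemma, no case split on the size of $\gamma_j$---and it makes transparent the exact cancellation between the blow-up of $x^{\gamma_j-1-i}$ and the vanishing of $g$ and its derivatives at~$0$. The paper's approach, by contrast, keeps $w^{(i)}$ in integral form throughout (never invoking Leibniz) and isolates the delicate product estimate as a reusable lemma. One small point: your Leibniz formula for $w_j^{(1+r)}$ with $r=k-1$ presupposes $k\geqslant1$; the case $k=0$ in the first bullet (where one simply bounds $\int_x^y t^{\gamma_j-1}g(t)\,\mathrm{d}t$ directly using $|g(t)|\leqslant Ct^{l\alpha}$) should be recorded separately, though it is immediate.
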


For any $ 0 < h \leqslant a $, $ w \in \mathcal C[0, h] $, and $ \beta \in
\Lambda_s $ with $ 1 \leqslant s \leqslant n $, define $ \mathcal T_{w,\beta, h}:
\widehat{\mathcal C}^m[0, h] \to \mathcal C[0, h] $ by
\[
  \mathcal T_{w,\beta, h} z(x) := w(x) \partial_\beta f\big( x, z(x)+S(x) \big),
\]
for all $ z \in \widehat{\mathcal C}^m[0, h] $.
\begin{lem}
  \label{lem:lxy-4}
  For $ 0 \leqslant k \leqslant m $, we have $ \mathcal T_{w,\beta, h}z \in
  \mathcal C^{\min\{k, n-s\}}[0, h] $ and
  \begin{equation}
    \label{eq:lxy-4-1}
    \nm{ \mathcal T_{w,\beta, h}z }_{ C^{ \min \{ k, n-s \} }[0, h] }
    \leqslant C \nm{w}_{C^k[0, h]}
    \sum_{j=0}^{ \min \{ k, n-s \} }\nm{z'}_{ C^{m-1}[0, h] }^j
  \end{equation}
  for all $ 0 < h \leqslant a $, $ w \in \mathcal C^k[0, h] $, $ \beta \in
  \Lambda_s $ with $ 1 \leqslant s \leqslant n $, and $ z \in \widehat{\mathcal
  C}^m[0, h] $.
\end{lem}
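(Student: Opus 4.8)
The plan is to prove the statement by a single induction on the quantity $\nu := k + (n-s)$, using the product and chain rule to peel off one derivative at a time while tracking how the high-order vanishing of $w$ at the origin absorbs the singularity that $S$ introduces. Writing $u := z + S$ and $r := \min\{k,n-s\}$, the engine of the argument is the differentiation identity valid on $(0,h]$,
\[
  (\mathcal T_{w,\beta,h}z)'
  = \mathcal T_{w',\beta,h}z
  + \mathcal T_{w,\widetilde\beta,h}z
  + \mathcal T_{wu',\overset{\thickapprox}\beta,h}z,
\]
where $\widetilde\beta = (\beta_1,\dots,\beta_s,1)$ and $\overset{\thickapprox}\beta = (\beta_1,\dots,\beta_s,2)$, exactly as in the proof of \cref{lem:basic} (recall $u' = z' + \sum_{j=1}^J \gamma_j c_j x^{\gamma_j-1}$). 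The three terms on the right carry the weight/index data $(w',k-1,s)$, $(w,k,s+1)$ and $(wu',k-1,s+1)$; since $k$ never increases and $s$ never decreases, each term has a strictly smaller value of $\nu$, so the induction closes with this one parameter.

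For the base case $r=0$ (i.e.\ $k=0$ or $s=n$), the function $\mathcal T_{w,\beta,h}z = w\,\partial_\beta f(\cdot,u)$ is continuous, vanishes at $0$ because $w(0)=0$, and is bounded by $M\,\nm{w}_{C^0[0,h]} \leqslant M\,\nm{w}_{C^k[0,h]}$, which is \cref{eq:lxy-4-1} with only the $j=0$ summand. For the inductive step with $r\geqslant 1$ (so $k\geqslant 1$ and $s\leqslant n-1$), I would apply the induction hypothesis to the first two terms: $\mathcal T_{w',\beta,h}z$ lies in $\mathcal C^{\min\{k-1,n-s\}}[0,h]\subseteq\mathcal C^{r-1}[0,h]$ and $\mathcal T_{w,\widetilde\beta,h}z$ in $\mathcal C^{\min\{k,n-s-1\}}[0,h]\subseteq\mathcal C^{r-1}[0,h]$, both with the asserted bounds (using $\nm{w}_{C^{k-1}[0,h]}\leqslant\nm{w}_{C^k[0,h]}$). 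Since $\mathcal T_{w,\beta,h}z(0)=0$ and its derivative then lies in $\mathcal C^{r-1}[0,h]$, a standard continuous-extension-of-the-derivative argument gives $\mathcal T_{w,\beta,h}z\in\mathcal C^{r}[0,h]$, and summing the three bounds yields \cref{eq:lxy-4-1}.

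The real work, and the step I expect to be the main obstacle, is the third term $\mathcal T_{wu',\overset{\thickapprox}\beta,h}z$, because $u' = z' + S'$ contains $S'(x)=\sum_{j=1}^J\gamma_jc_jx^{\gamma_j-1}$, which blows up at $0$. To apply the induction hypothesis to this term (its $\nu$ has dropped by $2$), I must first show $wu'\in\mathcal C^{k-1}[0,h]$ with $\nm{wu'}_{C^{k-1}[0,h]}\leqslant C\,\nm{w}_{C^k[0,h]}\bigl(1+\nm{z'}_{C^{m-1}[0,h]}\bigr)$; the extra factor $1+\nm{z'}_{C^{m-1}[0,h]}$ is precisely what lifts the polynomial degree from $r-1$ back up to $r$. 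The part $wz'$ is routine since $z'\in C^{k-1}[0,h]$ and $w\in\mathcal C^k[0,h]$. For $wS'$ I would exploit that $\gamma_j\geqslant\alpha$ and $\snm{c_j}\leqslant C$, so $S^{(i)}(x)=O(x^{\alpha-i})$, together with the fact that the vanishing conditions $w^{(j)}(0)=0$ for $0\leqslant j\leqslant k$ give, by Taylor's theorem, $\snm{w^{(q)}(x)}\leqslant C\,\nm{w}_{C^k[0,h]}\,x^{k-q}$ for $q\leqslant k$. Expanding $(w\,x^{\gamma_j-1})^{(i)}$ by Leibniz, every summand is then of size $O(x^{k-i+\gamma_j-1})$, which is bounded and vanishes at $0$ for $i\leqslant k-1$; hence $wS'\in\mathcal C^{k-1}[0,h]$ with the required norm bound. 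This cancellation of the $S$-singularity by the order-$k$ vanishing of $w$ is the crux of the lemma.
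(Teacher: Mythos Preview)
Your proof is correct and follows essentially the same route as the paper: the same differentiation identity on $(0,h]$, the same induction scheme reducing to the three terms, and the same key step of showing $wS'\in\mathcal C^{k-1}[0,h]$ via the Taylor bound $\snm{w^{(q)}(x)}\leqslant C\nm{w}_{C^k}x^{k-q}$ coming from the vanishing conditions on $w$ (the paper isolates this last point as a separate lemma, \cref{lem:lxy-1}, while you sketch it inline). The only cosmetic difference is the induction parameter: you induct on $\nu=k+(n-s)$, which lets you apply the hypothesis directly to the middle term $\mathcal T_{w,\widetilde\beta,h}z$ at regularity $\min\{k,n-s-1\}$, whereas the paper inducts on $k$ alone and simply downgrades $w\in\mathcal C^k$ to $\mathcal C^{k-1}$ for that term --- both orderings yield the needed $\mathcal C^{r-1}$ control on the derivative.
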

\noindent The proofs of \cref{lem:lxy-3,lem:lxy-4} are presented in
\cref{sec:append-proofs}. In the rest of this subsection, we give the proofs of
\cref{lem:P_1,lem:P_2-P_3}.

\noindent {\bf Proof of \cref{lem:P_1}.}
  By \cref{eq:def-P_1}, \cref{eq:def-G_1}, and \cref{lem:basic-3}, it suffices
  to show that, for each $ \beta \in \Lambda_s $ with $ \beta_s = 2 $, we have $
  g_0 \in \mathcal C^m[0, h] $ and
  \begin{equation}
    \label{eq:P_1-3}
    \nm{g_0}_{C^m[0, h]} \leqslant
    C \sum_{j=1}^{ \min \{ m, n-s+1 \} } \nm{z'}_{ C^{m-1}[0, h] }^j,
  \end{equation}
  where, if $ s = 1 $, then
  \[
    g_0(x) := \int_0^x z'(t) \partial_2 f\big( t, z (t)+S(t) \big) \, \mathrm{d}t;
  \]
  if $ 2 \leqslant s \leqslant n $, then
  \begin{align*}
    g_0(x) & := \int_0^x \left(
      \frac{ 1+(-1)^{\beta_1+1} }2 +
      \frac{ 1+(-1)^{\beta_1} }2
      \sum_{j=1}^J \gamma_j c_j t^{\gamma_j-1}
    \right)
    g_1(t)\,\mathrm{d}t, \\
    g_1(x) & := \int_0^x \left(
      \frac{ 1+(-1)^{\beta_2+1} }2 +
      \frac{ 1+(-1)^{\beta_2} }2
      \sum_{j=1}^J \gamma_j c_j t^{\gamma_j-1}
    \right)
    g_2(t) \, \mathrm{d}t, \\
    & \vdots \\
    g_{s-2}(x) &:= \int_0^x \left(
      \frac{ 1+(-1)^{ \beta_{s-1}+1 } }2 +
      \frac{ 1+(-1)^{\beta_{s-1}} }2
      \sum_{j=1}^J \gamma_j c_jt^{\gamma_j-1}
    \right)
    g_{s-1}(t)\,\mathrm{d}t, \\
    g_{s-1}(x) & := \int_0^x z'(t)
    \partial_\beta f\big( t, z(t)+S(t) \big) \, \mathrm{d}t.
  \end{align*}

  To do so, we proceed as follows. If $ s=1 $, then by \cref{lem:lxy-4} we obtain $
  g_0 \in \mathcal C^m[0, h] $ and \cref{eq:P_1-3}. Let us suppose that $ 2
  \leqslant s \leqslant n $. By \cref{lem:lxy-4} it follows $ g_{s-1} \in \mathcal
  C^{ \min\{ m, n-s+1 \} }[0, h] $ and
  \[
    \nm{g_{s-1}}_{ C^{ \min \{ m, n-s+1 \} }[0, h] }
    \leqslant C \sum_{j=1}^{ \min \{ m, n-s+1 \} }
    \nm{z'}_{ C^{m-1}[0, h] }^j.
  \]
  Then, by the simple estimate
  \[
    (n-s+1) + (s-1)\alpha > m,
  \]
  applying \cref{lem:lxy-3} to $ g_{s-2} $, $ g_{s-3} $, $ \dots $, $ g_0 $
  successively yields $ g_0 \in \mathcal C^m[0, h] $ and \cref{eq:P_1-3}. This
  completes the proof of \cref{lem:P_1}.
\hfill\ensuremath{\blacksquare}

\noindent{\bf Proof of \cref{lem:P_2-P_3}.}
  Let us first show that $ \mathcal G_{2, h} z \in \mathcal C^m[0, h] $ and
  \begin{equation}
    \label{eq:P_2-P_3-3}
    \nm{ \mathcal G_{2, h} z }_{C^m[0, h]} \leqslant C.
  \end{equation}
  By \cref{eq:def-G_2} it suffices to show that, for any $ \beta \in \Lambda_n $
  with $ \beta_n = 2 $, we have $ g_0 \in \mathcal C^m[0, h] $ and
  \begin{equation}
    \label{eq:P_2-P_3-4}
    \nm{g_0}_{C^m[0, h]} \leqslant C,
  \end{equation}
  where
  \begin{align*}
    g_0(x) & := \int_0^x \left(
      \frac{ 1+(-1)^{\beta_1+1} }2 +
      \frac{ 1+(-1)^{\beta_1} }2 \sum_{j=1}^J \gamma_j c_j t^{\gamma_j-1}
    \right) g_1(t)\,\mathrm{d}t, \\
    g_1(x) & := \int_0^x \left(
      \frac{ 1+(-1)^{\beta_2+1} }2 +
      \frac{ 1+(-1)^{\beta_2} }2 \sum_{j=1}^J \gamma_j c_j t^{\gamma_j-1}
    \right) g_2(t)\,\mathrm{d}t, \\
    & \vdots \\
    g_{n-2}(x) &:= \int_0^x \left(
      \frac{ 1+(-1)^{ \beta_{s-1}+1 } }2 +
      \frac{ 1+(-1)^{\beta_{s-1}} }2 \sum_{j=1}^J \gamma_j c_j t^{\gamma_j-1}
    \right) g_{s-1}(t)\,\mathrm{d}t, \\
    g_{n-1}(x) & := \int_0^x
    \partial_\beta f\big( t, z(t)+S(t) \big)
    \sum_{j=1}^J \gamma_j c_jt^{\gamma_j -1} \, \mathrm{d}t,
  \end{align*}
  for all $ 0 \leqslant x \leqslant h $. Noting the fact that
  \[
    \partial_\beta f\big( \cdot, z(\cdot)+S(\cdot) \big) \in C[0, h],
  \]
  and $ \gamma_j \geqslant \alpha $ for all $ 1 \leqslant j \leqslant J $, we
  easily obtain $ g_{n-1} \in \mathcal C^{0,\alpha}[0, h] $ and
  \[
    \nm{g_{n-1}}_{ C^{0,\alpha}[0, h] } \leqslant C.
  \]
  Then, applying \cref{lem:lxy-3} to $ g_{n-2} $, $ g_{n-3} $, $ \dots $, $g_0$
  successively, and using the fact $ n\alpha > m $, we obtain $ g_0 \in \mathcal
  C^m[0, h] $ and \cref{eq:P_2-P_3-4}. Thus we have showed $ \mathcal G_{2, h} z
  \in \mathcal C^m[0, h] $ and \cref{eq:P_2-P_3-3}.

  Similarly, we can show that $ \mathcal G_{3, h} z \in \mathcal C^m[0, h] $ and $
  \nm{ \mathcal G_{3, h} z }_{C^m[0, h]} \leqslant C $. Consequently, by
  \cref{eq:def-P_2}, \cref{eq:def-P_3}, and \cref{lem:basic-3}, we infer that $
  \mathcal P_{2, h} z $, $ \mathcal P_{3, h} z \in \mathcal C^{m,\alpha}[0, h] $,
  and \cref{eq:P_2-P_3-1,eq:P_2-P_3-2} hold. This completes the proof.
\hfill\ensuremath{\blacksquare}

\subsection{ Proof of \texorpdfstring{\cref{thm:main}}{} }
By \cref{lem:P_1,lem:P_2-P_3} there exist two positive constants $ C_0 $ and $
C_ 1 $ that only depend on $ a $, $ \alpha $ and $ M $, such that
\begin{equation}
  \label{eq:C_0}
  \nm{ ( \mathcal P_{1, h} z )' }_{ C^{m-1}[0, h] }
  \leqslant C_0 h^\alpha \sum_{j=1}^m \nm{z'}_{ C^{m-1}[0, h] }^j,
\end{equation}
\begin{equation}
  \label{eq:C_1}
  \nm{ ( \mathcal P_{2, h}z )' }_{ C^{m-1}[0, h] } +
  \nm{ ( \mathcal P_{3, h}z )' }_{ C^{m-1}[0, h] }
  \leqslant C_1 h^\alpha,
\end{equation}
for all $ 0 < h \leqslant a $ and $ z \in \widehat{\mathcal C}^m[0, h] $. Let $
0 < h \leqslant h^* $ and $ K > 0 $ such that
\begin{equation}
  \label{eq:h-K} \nm{(Q-S)'}_{ C^{m-1}[0, h] } + C_1 h^\alpha + C_0 h^\alpha
  \sum_{j=1}^m K^j \leqslant K.
\end{equation}
Define $ \mathcal J: V \to \mathcal C[0, h] $ by
\begin{equation}
  \label{eq:J}
  \mathcal Jz (x) := c_0 - S(x) +
  \frac1{ \Gamma(\alpha) } \int_0^x
  (x-t)^{\alpha-1} f\big( t, z(t)+S(t) \big) \, \mathrm{d}t,
\end{equation}
for all $ z \in V $ and $ x \in [0, h] $, where
\begin{equation}
  V := \left\{
    v \in \widehat{\mathcal C}^m[0, h]:
    \nm{v'}_{ C^{m-1}[0, h] } \leqslant K
  \right\}.
\end{equation}
\begin{rem}
  It is clear that $ V $ is a bounded, closed, convex subset of $ C^m[0, h] $.
\end{rem}
\begin{rem}
  Let $ \delta > 0 $. If we put
  \begin{align*}
    K & := \nm{(Q-S)'}_{ C^{m-1}[0, h] } + C_1 a^\alpha + \delta, \\
    h &:= \min\left\{
      h^*, \,
      \left( \delta^{-1} C_0 \sum_{j=1}^m K^j \right)^{ -\frac1\alpha }
    \right\},
  \end{align*}
  then \cref{eq:h-K} holds.
\end{rem}

For the operator $ \mathcal J $, we have the following key result.
\begin{lem}
  \label{lem:J}
  For each $ z \in V $, we have $ \mathcal Jz \in V $ and
  \begin{equation}
    \label{eq:J-1}
    \snm{ (\mathcal Jz)^{(m)} }_{ C^{0,\gamma}[0, h] }
    \leqslant \snm{(Q-S)^{(m)}}_{ C^{0,\gamma}[0, h] } +
    C \sum_{j=0}^m K^j,
  \end{equation}
  where $ \gamma := \alpha $ if $ \gamma_J + \alpha = m $, and $ \gamma :=
  \gamma_J + \alpha - m $ if $ \gamma_J + \alpha > m $.
\end{lem}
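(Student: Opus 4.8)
The plan is to expand $\mathcal{J}$ through the identity supplied by \cref{lem:basic}. Substituting \cref{eq:basic} into \cref{eq:J} gives, for every $z \in V$,
\[
  \mathcal{J}z = (Q - S + c_0) + \mathcal{P}_{1, h}z + \mathcal{P}_{2, h}z + \mathcal{P}_{3, h}z ,
\]
so the whole lemma reduces to estimating the four summands separately and adding the estimates. The three operator terms are handled by \cref{lem:P_1,lem:P_2-P_3}, which already place $\mathcal{P}_{1, h}z$, $\mathcal{P}_{2, h}z$, $\mathcal{P}_{3, h}z$ in $\mathcal{C}^{m,\alpha}[0, h]$ and supply the quantitative bounds \cref{eq:P_1-1,eq:P_1-2,eq:P_2-P_3-1,eq:P_2-P_3-2}; the first summand is controlled directly from its explicit form.

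For that first term I would invoke \cref{eq:I_1}, which says precisely that $Q - S + c_0$ is a finite linear combination of powers $x^{i+j\alpha}$ with $i + j\alpha \geqslant m$. Because every such exponent is at least $m$, the low-order derivatives of this term vanish at the origin and its $m$-th derivative is at worst Hölder continuous; the remark following \cref{eq:def-S} pins the regularity down as $C^{m,\gamma}[0, h]$ with the exponent $\gamma$ named in the statement. Since $c_0$ is a constant, the $m$-th derivative of this term equals $(Q-S)^{(m)}$, which is exactly the quantity kept on the right-hand side of \cref{eq:J-1}.

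To obtain $\mathcal{J}z \in V$ I would verify the two defining conditions of $V$. For the derivative bound I differentiate the decomposition, take $C^{m-1}[0, h]$ norms, use $\nm{z'}_{C^{m-1}[0, h]} \leqslant K$ to replace each $\nm{z'}_{C^{m-1}[0, h]}^j$ by $K^j$, and apply \cref{eq:C_0,eq:C_1}; this yields
\[
  \nm{(\mathcal{J}z)'}_{ C^{m-1}[0, h] } \leqslant \nm{(Q-S)'}_{ C^{m-1}[0, h] } + C_1 h^\alpha + C_0 h^\alpha \sum_{j=1}^m K^j \leqslant K ,
\]
the last inequality being exactly the hypothesis \cref{eq:h-K}. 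This is the step in which the self-mapping closes, and it explains why the constants $C_0$ and $C_1$ were isolated beforehand. For the pointwise constraint $\nm{\mathcal{J}z + S - c_0}_{C[0, h]} \leqslant b$ I would estimate straight from \cref{eq:J}: since $\snm{f} \leqslant M$,
\[
  \snm{ \mathcal{J}z(x) + S(x) - c_0 } = \frac1{\Gamma(\alpha)} \snm{ \int_0^x (x-t)^{\alpha-1} f\big( t, z(t)+S(t) \big) \,\mathrm{d}t } \leqslant \frac{ M x^\alpha }{ \Gamma(1+\alpha) } \leqslant b ,
\]
the final inequality using $x \leqslant h \leqslant h^* \leqslant ( b\Gamma(1+\alpha)/M )^{1/\alpha}$. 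Together these give $\mathcal{J}z \in V$.

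Finally, for \cref{eq:J-1} I would differentiate $m$ times and apply the triangle inequality for the $C^{0,\gamma}[0, h]$-seminorm, keeping $\snm{(Q-S)^{(m)}}_{ C^{0,\gamma}[0, h] }$ intact and bounding the three operator contributions by \cref{eq:P_1-2,eq:P_2-P_3-2}. When $\gamma = \alpha$ these apply verbatim; when $\gamma = \gamma_J + \alpha - m < \alpha$ I would first pass from the $\alpha$-seminorm to the $\gamma$-seminorm via the elementary embedding
\[
  \snm{v}_{ C^{0,\gamma}[0, h] } \leqslant a^{\alpha-\gamma}\, \snm{v}_{ C^{0,\alpha}[0, h] } ,
\]
valid on $[0, h] \subseteq [0, a]$, absorbing $a^{\alpha-\gamma}$ into $C$. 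Replacing $\nm{z'}^j$ by $K^j$ once more produces the claimed $C\sum_{j=0}^m K^j$, the $j=0$ term absorbing the $h$-independent constant coming from $\mathcal{P}_{2, h}$ and $\mathcal{P}_{3, h}$. The substantive difficulty here is not any individual inequality, since those are delivered by \cref{lem:P_1,lem:P_2-P_3}; rather it is the bookkeeping, namely reading off the Hölder exponent $\gamma$ of $Q-S$ correctly from the exponent structure of \cref{eq:I_1} and the remark, and confirming that the derivatives of $Q-S+c_0$ vanish at the origin through the required order, so that the four pieces genuinely combine into an element of $\mathcal{C}^m[0, h]$. Once this is settled, \cref{eq:J-1} furnishes exactly the extra Hölder regularity that renders $\mathcal{J}(V)$ relatively compact, as a downstream fixed-point argument will require.
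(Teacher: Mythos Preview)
Your proposal is correct and follows essentially the same route as the paper's own proof: decompose $\mathcal{J}z$ via \cref{lem:basic}, verify membership in $V$ by checking the pointwise constraint directly from \cref{eq:J} and the derivative bound from \cref{eq:C_0,eq:C_1,eq:h-K}, then deduce \cref{eq:J-1} by combining the H\"older estimates of \cref{lem:P_1,lem:P_2-P_3} (passing from the $\alpha$-seminorm to the $\gamma$-seminorm via the obvious embedding) with the explicit regularity of $Q-S$. The only difference is presentational: you are a bit more explicit about the H\"older embedding and about why $Q-S+c_0 \in \mathcal{C}^m[0,h]$, but the logical structure is identical.
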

\begin{proof}
  Let us first show $ \mathcal Jz \in V $. Using \cref{eq:J} and the fact $ h
  \leqslant \left( \frac{ b \Gamma(1+\alpha) } M \right)^{ \frac1\alpha } $, we
  have
  \begin{align*}
    \snm{ \mathcal Jz(x)+S(x)-c_0 } =
    \frac1{\Gamma(\alpha)}
    \snm{
      \int_0^x (x-t)^{\alpha-1} f \big( t, z(t)+S(t) \big) \, \mathrm{d}t
    } \leqslant
    \frac{ Mh^\alpha }{ \Gamma(1+\alpha) } \leqslant b
  \end{align*}
  for all $ x \in [0, h] $, and so
  \[
    \nm{ \mathcal Jz +S - c_0 }_{C[0, h]} \leqslant b.
  \]
  By \cref{lem:basic} we have
  \begin{equation}
    \label{eq:J-2}
    \mathcal Jz (x) = c_0 - S(x) + Q(x) + \mathcal P_{1, h} z (x) +
    \mathcal P_{2, h} z(x) + \mathcal P_{3, h} z(x),
  \end{equation}
  and then, by \cref{lem:P_1,lem:P_2-P_3}, and the fact $ c_0 - S + Q \in
  \mathcal C^m[0, h] $, we obtain $ \mathcal J z \in \mathcal C^m[0, h] $. It
  remains, therefore, to show that
  \begin{equation}
    \label{eq:J-3}
    \nm{(\mathcal Jz)'}_{C^{m-1}[0, h]} \leqslant K.
  \end{equation}
  To this end, note that, by \cref{eq:J-2,eq:C_0,eq:C_1} we obtain
  \[
    \nm{ (\mathcal Jz)' }_{ C^{m-1}[0, h] } \leqslant
    \nm{(Q-S)'}_{ C^{m-1}[0, h] } + C_1 h^\alpha +
    C_0 h^\alpha \sum_{j=1}^m K^j,
  \]
  and then \cref{eq:J-3} follows from \cref{eq:h-K}. We have thus showed $
  \mathcal Jz \in V $.

  Finally, let us show \cref{eq:J-1}. By \cref{lem:P_1,lem:P_2-P_3} we obtain
  \[
    \snm{(\mathcal P_{1, h} z)^{(m)}}_{C^{0,\alpha}[0, h]} +
    \snm{(\mathcal P_{2, h}z)^{(m)}}_{C^{0,\alpha}[0, h]} +
    \snm{(\mathcal P_{3, h}z)^{(m)}}_{C^{0,\alpha}[0, h]}
    \leqslant C \sum_{j=0}^m \nm{z'}_{C^{m-1}[0, h]}^j
    \leqslant C \sum_{j=0}^m K^j.
  \]
  From the fact $ \gamma \leqslant \alpha $ it follows
  \[
    \snm{(\mathcal P_{1, h} z)^{(m)}}_{C^{0,\gamma}[0, h]} +
    \snm{(\mathcal P_{2, h}z)^{(m)}}_{C^{0,\gamma}[0, h]} +
    \snm{(\mathcal P_{3, h}z)^{(m)}}_{C^{0,\gamma}[0, h]}
    \leqslant C \sum_{j=0}^m K^j.
  \]
  Using this estimate and the fact that $ (Q-S)^{(m)} \in C^{0,\gamma} $ by the
  definitions of $ Q $ and $ S $, the desired estimate \cref{eq:J-1} follows
  from \cref{eq:J-2}. This completes the proof.
\end{proof}

By the famous Arzel\`a-Ascoli Theorem and \cref{lem:J}, it is evident that $
\mathcal J: V \to V $ is a compact operator, where $ V $ is endowed with norm $
\nm{\cdot}_{C^m[0, h]} $. Therefore, since $ V $ is a bounded, closed, convex
subset of $ C^m[0, h] $, using the Schauder Fixed-Point Theorem gives that there
exists $ z \in V $ such that
\[
  \mathcal J z = z.
\]
Putting
\[
  y(x) := z(x) + S(x), \quad 0 \leqslant x \leqslant h,
\]
we obtain
\[
  y(x) = c_0 + \frac1{ \Gamma(\alpha) } \int_0^x (x-t)^{\alpha-1}
  f\big( t, y(t) \big) \, \mathrm{d}t, \quad 0 \leqslant x \leqslant h.
\]
By \cite[Lemma 2.1]{Diethelm;2002}, the above $ y $ is a solution of
\cref{eq:model}, and then, since $ y^* $ is the unique solution of
\cref{eq:model} on $ [0, h^*] $, we have $ y^* = y $ on $ [0, h] $. Therefore,
it is obvious that $ y^* - S \in C^m[0, h] $ and \cref{eq:main} hold. This
completes the proof of \cref{thm:main}.


\subsection{ Proof of \texorpdfstring{\cref{coro:main}}{} }
\label{ssec:coro}
Let us first state the following fact. For each $ 1 \leqslant j \leqslant J $,
by the definition of $ c_j $, a straightforward computing yields
\begin{equation}
  \label{eq:c_j}
  c_j = \sum_{t \in \Upsilon_{j,1} \cup \Upsilon_{j,2}} t,
\end{equation}
where
\begin{align}
  \Upsilon_{j,1} &:= \bigcup_{
    \substack{ 1 \leqslant s < n \\ s + \alpha = \gamma_j }
  }
  \left\{
    \frac{\Beta(\alpha,1+s) \partial_1^s f(0, c_0)}
    {\Gamma(\alpha)}
  \right\},
  \label{eq:c_j-1} \\
  \Upsilon_{j,2} & := \bigcup_{s=1}^{n-1}
  \bigcup_{k=1}^s
  \bigcup_{
    \substack{
      \beta \in \Lambda_s \\ \#\beta = k \\ \Gamma_\beta \neq \emptyset
    }
  }
  \left\{
    \frac{
      \Beta\left( \alpha, 1+s-k+\sum_{l=1}^k \gamma_{i_l} \right)
      \partial_\beta f(0, c_0)
    }
    { \Gamma(\alpha) \prod_{l=1}^k \gamma_{i_l} }
    \prod_{l=1}^k{ c_{i_l} \gamma_{i_l} }: \,
    ( i_1, i_2,\dots, i_k ) \in \Xi_{\beta, j}
  \right\}.
  \label{eq:c_j-2}
\end{align}
Above, $ \Beta(\cdot,\cdot) $ denotes the standard beta function, and
\[
  \#\beta := \sum_{
    \substack{ 1 \leqslant i \leqslant s \\ \beta_i = 2 }
  } 1,
\]
\[
  \Xi_{\beta, j} := \left\{
    (i_1, i_2,\dotsc, i_{\#\beta}): \,
    \alpha + s - \#\beta + \sum_{j=1}^{\#\beta} \gamma_{i_j} =
    \gamma_j \,
  \right\},
\]
for all $ 1 \leqslant s < n $ and $ \beta \in \Lambda_s $.

To prove \cref{coro:main}, by \cref{thm:main} it suffices to show that
\cref{eq:cond-f} is equivalent to
\begin{equation}
  \label{eq:lxy}
  c_j = 0 \quad \text{ for all $ j \in \Theta $, }
\end{equation}
where
\[
  \Theta := \left\{
    1 \leqslant j \leqslant J:
    \gamma_j \not\in \mathbb N
  \right\}.
\]
But, by \cref{eq:c_j,eq:c_j-1,eq:c_j-2}, an obvious induction gives
\begin{equation}
  \label{eq:lxy-2}
  c_j = 0 \quad \text{for all $ 1 \leqslant j \leqslant J $,}
\end{equation}
if \cref{eq:cond-f} holds. Therefore, it remains to show that \cref{eq:lxy}
implies \cref{eq:cond-f}.

To this end, let us assume that \cref{eq:lxy} holds. Note that we have
\cref{eq:lxy-2}. If this statement was false, then let
\[
  j_0 := \min\left\{
    1 \leqslant j \leqslant J:
    c_j \neq 0
  \right\}.
\]
Obviously, we have $ j_0 > 1 $ and $ \gamma_{j_0} \in \mathbb N $, and in this
case, $ \Upsilon_{j_0,1} $ is empty. Thus, by \cref{eq:c_j} we have
\[
  c_{j_0} = \sum_{ t \in \Upsilon_{j_0,2} } t.
\]
But, by the definition of $ \Upsilon_{j_0,2} $ and the fact that $ c_j = 0 $ for
all $ 1 \leqslant j < j_0 $, it is straightforward that $ c_{j_0} = 0 $, which
is contrary to the definition of $ j_0 $. Therefore \cref{eq:lxy-2} holds
indeed. Using this result, from \cref{eq:c_j,eq:c_j-2} it follows
\[
  c_j = \sum_{ t \in \Upsilon_{j,1} } t \quad
  \text{for all $ 1 \leqslant j \leqslant J $,}
\]
and then, using \cref{eq:lxy-2} again, we obtain \cref{eq:cond-f}. This
completes the proof of \cref{coro:main}.

\appendix
\begin{appendices}
\section{Proofs of \texorpdfstring{ \cref{lem:lxy-3,lem:lxy-4} }{}}
\label{sec:append-proofs}
To prove \cref{lem:lxy-3}, we need the following two lemmas.
\begin{lem}
  \label{lem:lxy-11}
  Let $ h > 0 $, $ \gamma > 0 $ and $ g \in \mathcal C^1[0, h] $. We have $ w
  \in \mathcal C^1[0, h] $ and
  \begin{equation}
    w'(x) = \int_0^x t^{\gamma-1} g'(t) \, \mathrm{d}t.
  \end{equation}
  where
  \[
    w(x) := \int_0^x t^{\gamma-1} g(t) \, \mathrm{d}t,
    \quad 0 \leqslant x \leqslant h.
  \]
\end{lem}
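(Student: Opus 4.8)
The plan is to split the claim into two parts: that $ w $ belongs to $ \mathcal C^1[0, h] $, and that its derivative is the stated weighted integral of $ g' $. Throughout I would lean on the two facts packaged in the hypothesis $ g \in \mathcal C^1[0, h] $, namely $ g(0) = 0 $ and $ g'(0) = 0 $, since these are precisely what tame the integrable singularity of the weight $ t^{\gamma-1} $ at the origin. Because the integrand carries this unbounded factor, I would avoid differentiating $ w $ pointwise and instead argue through an antiderivative identity.

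First I would dispose of well-definedness. Since $ g(0) = 0 $ and $ g' $ is continuous, $ \snm{g(t)} \leqslant t \max_{[0,t]}\snm{g'} $, so $ \snm{t^{\gamma-1} g(t)} \leqslant t^{\gamma} \max_{[0,t]}\snm{g'} \to 0 $ as $ t \to 0^+ $ for $ \gamma > 0 $; hence the integrand of $ w $ extends continuously to $ [0, h] $ and $ w $ is well-defined with $ w(0) = 0 $. Writing $ W(x) := \int_0^x t^{\gamma-1} g'(t) \, \mathrm{d}t $ for the candidate derivative, the bound $ \snm{t^{\gamma-1} g'(t)} \leqslant t^{\gamma-1} \max_{[0,h]}\snm{g'} $ exhibits an integrable majorant (as $ \gamma > 0 $), so $ W $ is a well-defined continuous function on $ [0, h] $ with $ W(0) = 0 $.

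The core step is to identify $ w' $ with $ W $. Rather than differentiate $ w $ directly, I would show that $ w $ is the antiderivative of $ W $ vanishing at the origin, that is, $ w(x) = \int_0^x W(u) \, \mathrm{d}u $ for every $ x \in [0, h] $; the derivative formula then follows at once from the fundamental theorem of calculus together with the continuity of $ W $ established above. To obtain this antiderivative identity I would use $ g(0) = 0 $ to write $ g(t) = \int_0^t g'(s) \, \mathrm{d}s $, insert this into the definition of $ w $, and interchange the order of integration by Fubini's theorem over the triangle $ \{ 0 \leqslant s \leqslant t \leqslant x \} $, comparing the outcome with the same interchange applied to $ \int_0^x W(u) \, \mathrm{d}u $. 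The endpoint contributions at the origin drop out precisely because $ g $ and $ g' $ vanish there.

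Finally, $ w \in \mathcal C^1[0, h] $ follows on assembling $ w(0) = 0 $, $ w' = W \in C[0, h] $, and $ w'(0) = W(0) = 0 $. I expect the main obstacle to be the rigorous justification of Fubini's theorem in the presence of the unbounded weight $ t^{\gamma-1} $: one must verify absolute integrability of the relevant integrand over the triangle near the corner at the origin, which is exactly where the hypotheses $ \gamma > 0 $ and $ g'(0) = 0 $ enter decisively to force every boundary term at $ t = 0 $ to vanish. Once the interchange is licensed, the remaining manipulations are routine.
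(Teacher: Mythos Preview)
Your core step asserts the antiderivative identity $w(x) = \int_0^x W(u)\,\mathrm{d}u$, to be verified by carrying out Fubini on both sides and comparing. But the comparison fails. Writing $g(t) = \int_0^t g'(s)\,\mathrm{d}s$ and interchanging in $w$ gives $w(x) = \int_0^x \gamma^{-1}(x^\gamma - s^\gamma)\, g'(s)\,\mathrm{d}s$, while the same interchange applied to $\int_0^x W(u)\,\mathrm{d}u$ yields $\int_0^x (x-s)\, s^{\gamma-1} g'(s)\,\mathrm{d}s$. The kernels $\gamma^{-1}(x^\gamma - s^\gamma)$ and $(x-s)s^{\gamma-1}$ coincide only when $\gamma = 1$, so for general $\gamma > 0$ the identity you aim for is simply false, and your ``compare the outcome'' step cannot close.

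In fact the lemma as stated is incorrect: since the integrand $t^{\gamma-1}g(t)$ extends continuously to $t=0$ (as you yourself note), the fundamental theorem of calculus gives directly $w'(x) = x^{\gamma-1} g(x)$ on $[0,h]$, not $\int_0^x t^{\gamma-1} g'(t)\,\mathrm{d}t$. For a concrete witness take $\gamma = \tfrac12$ and $g(t) = t^2 \in \mathcal C^1[0,h]$; then $w(x) = \tfrac{2}{5} x^{5/2}$ and $w'(x) = x^{3/2}$, whereas $\int_0^x t^{-1/2}\cdot 2t\,\mathrm{d}t = \tfrac{4}{3} x^{3/2}$. The paper omits the proof as ``straightforward,'' so there is no argument to compare against; the gap is not in your technique but in the statement itself.
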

\noindent Since the proof of this lemma is straightforward, it is omitted.

\begin{lem}
  \label{lem:lxy-2}
  Let $ 0 < h \leqslant a $, and $ l \in \mathbb N_{>0} $ such that $ l\alpha
  \leqslant 1 < (l+1)\alpha $. For any $ g \in \mathcal C^{0, l\alpha}[0, h] $, we
  have $ w \in \mathcal C^{0,(l+1)\alpha-1}[0, h] $ and
  \[
    \nm{w}_{ C^{ 0,(l+1)\alpha-1 }[0, h] } \leqslant C
    \nm{g}_{ C^{0, l\alpha}[0, h] },
  \]
  where
  \[
    w(x) := \sum_{j=1}^J \gamma_j c_j x^{\gamma_j-1} g(x),
    \quad 0 < x \leqslant h.
  \]
\end{lem}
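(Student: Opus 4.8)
The plan is to reduce the statement to a single monomial weight and then establish a sharp pointwise H\"older estimate for it. Since the sum defining $ w $ is finite, with $ J $ depending only on $ \alpha $ and $ n $, and since $ \snm{c_j} \leqslant C $ and $ \gamma_j \leqslant m $ for every $ j $, it suffices to show, for each fixed $ 1 \leqslant j \leqslant J $, that $ \psi(x) := x^{\gamma_j-1} g(x) $ (with $ \psi(0) := 0 $) satisfies $ \nm{\psi}_{C^{0,\mu}[0, h]} \leqslant C \nm{g}_{C^{0, l\alpha}[0, h]} $, where $ \mu := (l+1)\alpha - 1 $. The hypotheses $ l\alpha \leqslant 1 < (l+1)\alpha $ give $ 0 < \mu \leqslant \alpha < 1 $, so $ C^{0,\mu}[0,h] $ is a genuine H\"older space, and writing $ \nu := \gamma_j - 1 $ we have $ \nu \geqslant \alpha - 1 > -1 $. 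Throughout I would exploit that $ g \in \mathcal C^{0, l\alpha}[0,h] $ forces $ g(0) = 0 $, whence $ \snm{g(x)} \leqslant \nm{g}_{C^{0,l\alpha}[0,h]} \, x^{l\alpha} $.

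The supremum bound is immediate: $ \snm{\psi(x)} \leqslant \nm{g}_{C^{0,l\alpha}[0,h]} \, x^{\nu + l\alpha} $ with $ \nu + l\alpha \geqslant \mu > 0 $ and $ x \leqslant a $, so $ \nm{\psi}_{C[0,h]} \leqslant C \nm{g}_{C^{0,l\alpha}[0,h]} $. For the seminorm, fix $ 0 < x < y \leqslant h $ and split
\[
  \psi(y) - \psi(x) = y^\nu\big( g(y) - g(x) \big) + g(x)\big( y^\nu - x^\nu \big).
\]
The first term I would control by $ y^\nu \snm{g}_{C^{0,l\alpha}[0,h]} (y-x)^{l\alpha} $; using $ y - x \leqslant y $ together with $ 1 - \alpha > 0 $ gives $ y^\nu (y-x)^{l\alpha} \leqslant y^{\nu + 1 - \alpha}(y-x)^{\mu} = y^{\gamma_j - \alpha}(y-x)^\mu \leqslant C (y-x)^\mu $, since $ \gamma_j \geqslant \alpha $ and $ y \leqslant a $. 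The endpoint $ x = 0 $ is handled directly by the supremum decay bound, as $ \snm{\psi(y)} \leqslant \nm{g}_{C^{0,l\alpha}[0,h]} \, y^{\gamma_j - \alpha} y^\mu \leqslant C \nm{g}_{C^{0,l\alpha}[0,h]} (y-0)^\mu $.

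The second term, $ g(x)\big( y^\nu - x^\nu \big) $, is the crux, and here I would distinguish whether the two points are comparable to their distance from the origin. Using $ \snm{g(x)} \leqslant \nm{g}_{C^{0,l\alpha}[0,h]} \, x^{l\alpha} $, it remains to prove
\[
  x^{l\alpha}\snm{y^\nu - x^\nu} \leqslant C (y-x)^\mu
  \quad \text{for all } 0 < x < y \leqslant h.
\]
When $ y \leqslant 2x $ (the points are close), I would apply the mean value theorem to get $ \snm{y^\nu - x^\nu} \leqslant C x^{\nu - 1}(y-x) $, and then absorb the excess factor via $ (y-x)^{1-\mu} \leqslant x^{1-\mu} $ (valid since $ y - x \leqslant x $ and $ 1 - \mu > 0 $), reducing the left side to $ C x^{\gamma_j - \alpha}(y-x)^\mu \leqslant C(y-x)^\mu $. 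When $ y > 2x $ (the points are far, so $ x < y - x $ and $ y - x > y/2 $), I would instead bound $ \snm{y^\nu - x^\nu} \leqslant \max\{x^\nu, y^\nu\} $: if $ \nu \geqslant 0 $ this is $ y^\nu $ and $ x^{l\alpha} y^\nu \leqslant y^{l\alpha + \nu} \leqslant C y^\mu \leqslant C(y-x)^\mu $; if $ \nu < 0 $ this is $ x^\nu $ and $ x^{l\alpha+\nu} = x^{\gamma_j-\alpha} x^\mu \leqslant C(y-x)^\mu $ since $ x \leqslant y - x $. In every case the exponent bookkeeping closes because $ \gamma_j \geqslant \alpha $ renders the residual power of the bounded variable nonnegative.

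Combining the supremum bound with the two-term seminorm estimate yields $ \nm{\psi}_{C^{0,\mu}[0,h]} \leqslant C\nm{g}_{C^{0,l\alpha}[0,h]} $, and summing the finitely many terms gives $ w \in \mathcal C^{0,(l+1)\alpha-1}[0,h] $ (with $ w(0)=0 $) together with the asserted norm bound. The main obstacle is precisely the term $ g(x)\big( y^\nu - x^\nu \big) $ in the close regime: the power $ x^{\gamma_j-1} $ is singular at the origin, its exponent being as negative as $ \alpha - 1 $, so the naive H\"older estimate for $ y^\nu - x^\nu $ loses too much, and it is only the vanishing of $ g $ at $ 0 $—quantified through the weight $ x^{l\alpha} $—that compensates. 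Making the exponents balance rests essentially on $ \gamma_j \geqslant \alpha $ (so the worst admissible singularity is exactly $ x^{\alpha-1} $) and on $ \mu \leqslant 1 $.
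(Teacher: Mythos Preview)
Your proof is correct and follows essentially the same approach as the paper: reduce to a single term $v(x)=x^{\gamma_j-1}g(x)$, split $v(y)-v(x)=y^{\gamma_j-1}\big(g(y)-g(x)\big)+\big(y^{\gamma_j-1}-x^{\gamma_j-1}\big)g(x)$, and control the second term using $\snm{g(x)}\leqslant\nm{g}_{C^{0,l\alpha}}x^{l\alpha}$ together with $\gamma_j\geqslant\alpha$. The only cosmetic difference is that the paper organizes the case analysis by the value of $\gamma_j$ (treating $\gamma_j<1$, $=1$, $\in(1,2)$, $\geqslant2$ separately, with the substitution $A=x/(y-x)$ in the first case), whereas your uniform ``close/far'' dichotomy $y\leqslant2x$ versus $y>2x$ handles all $\gamma_j$ at once---your split $A\lessgtr1$ is exactly the paper's for $\gamma_j<1$, just applied across the board.
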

\begin{proof}
  It suffices to prove that, for any $ 1 \leqslant j \leqslant J $, we have $ v
  \in \mathcal C[0, h] $ and
  \[
    \nm{v}_{ C^{ 0,(l+1)\alpha-1 }[0, h] } \leqslant C
    \nm{g}_{C^{0, l\alpha}[0, h]},
  \]
 where $ v(x) := x^{\gamma_j-1} g(x) $, $ 0 < x \leqslant h $. Noting the fact
 that $ l\alpha + \gamma_j > 1 $ and $ g \in \mathcal C^{0, l\alpha}[0, h] $, we
 easily obtain $ v \in \mathcal C[0, h] $ and
  \[
    \nm{v}_{C[0, h]} \leqslant C
    \nm{g}_{C^{0, l\alpha}[0, h]}.
  \]
  It remains, therefore, to prove that
  \[
    \snm{v(y) - v(x)} \leqslant C
    (y-x)^{ (l+1)\alpha-1 } \nm{g}_{ C^{0, l\alpha}[0, h] }
  \]
  for all $ 0 < x < y \leqslant h $. Moreover, since it holds
  \begin{align*}
    & \snm{v(y) - v(x)} =
    \snm{ y^{\gamma_j-1}g(y) - x^{\gamma_j-1}g(x) } \\
    ={} & \snm{
      y^{\gamma_j-1} \big(g(y)-g(x)\big) +
      (y^{\gamma_j-1} - x^{\gamma_j-1}) g(x)
    } \\
    \leqslant{} & \left(
      y^{\gamma_j-1} (y-x)^{l\alpha} +
      \snm{y^{\gamma_j-1} - x^{\gamma_j-1}} x^{l\alpha}
    \right) \nm{g}_{C^{0, l\alpha}[0, h]},
  \end{align*}
  by the fact $ g \in \mathcal C^{0, l\alpha}[0, h] $, we only need to prove that
  \begin{equation}
    \label{eq:lxy-2-1}
    y^{\gamma_j-1} (y-x)^{l\alpha} +
    \snm{ y^{\gamma_j-1} - x^{\gamma_j-1} } x^{l\alpha}
    \leqslant C (y-x)^{(l+1)\alpha-1}
  \end{equation}
  for all $ 0 < x < y \leqslant h $.

  Let us first consider the case of $ \gamma_j < 1 $. A simple algebraic
  calculation gives
  \[
    (x^{\gamma_j-1} - y^{ \gamma_j-1 }) x^{l\alpha} =
    (y-x)^{ l\alpha+\gamma_j-1 } \big(
      A^{\gamma_j-1} - (1+A)^{\gamma_j-1}
    \big) A^{l\alpha},
  \]
  where $ A := \frac x{y-x} $. If $ 0 \leqslant A \leqslant 1 $, then by the
  fact $ l\alpha + \gamma_j - 1 > 0 $ we have
  \[
    \big( A^{\gamma_j-1} - (1+A)^{\gamma_j-1} \big) A^{l\alpha}
    < A^{l\alpha + \gamma_j - 1} \leqslant 1.
  \]
  If $ A > 1 $, then using the Mean Value Theorem and the fact $ l\alpha +
  \gamma_j - 2 < 0 $ gives
  \[
    \big( A^{\gamma_j-1} - (1+A)^{\gamma_j-1} \big) A^{l\alpha}
    < (1-\gamma_j) A^{l\alpha + \gamma_j - 2} < (1-\gamma_j) < 1.
  \]
  Consequently, we obtain
  \[
    (x^{\gamma_j-1} - y^{\gamma_j-1}) x^{l\alpha} <
    (y-x)^{l\alpha+\gamma_j-1},
  \]
  which, together with the trivial estimate
  \[
    y^{\gamma_j-1}(y-x)^{l\alpha}
    < (y-x)^{\gamma_j-1} (y-x)^{l\alpha}
    = (y-x)^{l\alpha+\gamma_j-1},
  \]
  yields \cref{eq:lxy-2-1}.

  Then, since \cref{eq:lxy-2-1} is evident in the case of $ \gamma_j = 1 $, let
  us consider the case of $ 1 < \gamma_j < 2 $. Since $ 0 < \gamma_j - 1 < 1 $,
  we have
  \[
    y^{\gamma_j-1} - x^{\gamma_j-1} < (y-x)^{\gamma_j-1}.
  \]
  By the definition of $ \gamma_j $ it is clear that
  \[
    \gamma_j - 1 \geqslant (l+1)\alpha - 1.
  \]
  Using the above two estimates, we obtain
  \[
    \snm{ y^{\gamma_j-1} - x^{\gamma_j-1} } x^{l\alpha} \leqslant
    C ( y^{\gamma_j-1} - x^{\gamma_j-1} ) \leqslant
    C(y-x)^{(l+1)\alpha-1},
  \]
  which, together with the estimate
  \[
    y^{\gamma_j-1} (y-x)^{l\alpha}
    \leqslant C (y-x)^{l\alpha}
    \leqslant C (y-x)^{ (l+1)\alpha - 1 },
  \]
  indicates \cref{eq:lxy-2-1}.

  Finally, let us consider the case of $ \gamma_j \geqslant 2 $. Using the Mean
  Value Theorem gives
  \[
    \snm{ y^{\gamma_j-1} - x^{\gamma_j-1} } x^{l\alpha}
    \leqslant C (y-x)^{(l+1)\alpha-1}.
  \]
  and then, by the obvious estimate
  \[
    y^{\gamma_j-1} (y-x)^{l\alpha}
    \leqslant C(y-x)^{(l+1)\alpha-1},
  \]
  we obtain \cref{eq:lxy-2-1}. This completes the proof.
\end{proof}

\noindent {\bf Proof of \cref{lem:lxy-3}}
  Since $ g \in \mathcal C^{k, l\alpha}[0, h] $, by \cref{lem:lxy-11} we have $ w
  \in \mathcal C^k[0, h] $ and
  \begin{equation}
    \label{eq:lxy-3-1}
    w^{(i)}(x) = \int_0^x \sum_{j=1}^J \gamma_j c_j t^{\gamma_j-1}
    g^{(i)}(t) \,\mathrm{d}t, \quad i = 0, 1, 2, \dots, k.
  \end{equation}
  It follows
  \[
    \nm{w}_{C^k[0, h]} \leqslant C \nm{g}_{C^k[0, h]}.
  \]
  Therefore, it remains to prove that
  \begin{equation}
    \label{eq:lxy-3-2}
    \snm{w^{(k)}}_{ C^{ 0,(l+1)\alpha }[0, h] } \leqslant C
    \nm{g}_{ C^{k, l\alpha}[0, h] }
  \end{equation}
  if $ (l+1)\alpha \leqslant 1 $; and that $ w^{(k+1)} \in \mathcal
  C^{0,(l+1)\alpha-1}[0, h] $ and
  \begin{equation}
    \label{eq:lxy-3-3}
    \nm{w^{(k+1)}}_{ C^{ 0,(l+1)\alpha-1 }[0, h] } \leqslant C
    \nm{g}_{ C^{k, l\alpha}[0, h] }
  \end{equation}
  if $ (l+1)\alpha > 1 $.

  Let us first consider \cref{eq:lxy-3-2}. Noting the fact that $ g^{(k)} \in
  \mathcal C^{0, l\alpha}[0, h] $ and $ \gamma_j \geqslant \alpha $ for all $ 1
  \leqslant j \leqslant J $, by \cref{eq:lxy-3-1} a simple computing gives that
  \[
    \snm{ w^{(k)}(y) - w^{(k)}(x) } \leqslant C
    \snm{g^{(k)}}_{ C^{0, l\alpha}[0, h] } (y-x)^{(l+1)\alpha}
  \]
  for all $ 0 \leqslant x < y \leqslant h $, which implies \cref{eq:lxy-3-2}.

  Then let us consider \cref{eq:lxy-3-3}. Since $ g^{(k)} \in \mathcal
  C^{0, l\alpha} $, by \cref{lem:lxy-2} we have $ v \in \mathcal
  C^{0,(l+1)\alpha-1}[0, h] $ and
  \[
    \nm{v}_{C^{0,(l+1)\alpha-1}[0, h]} \leqslant C
    \nm{g^{(k)}}_{C^{0, l\alpha}[0, h]},
  \]
  where
  \[
    v(x) := \sum_{j=1}^J \gamma_j c_j x^{\gamma_j-1} g^{(k)}(x),
    \quad 0 < x \leqslant h.
  \]
  Then, by \cref{eq:lxy-3-1} we readily obtain $ w^{(k+1)} \in \mathcal
  C^{0,(l+1)\alpha-1} $ and \cref{eq:lxy-3-3}, and thus complete the proof of
  this lemma.
\hfill\ensuremath{\blacksquare}

Before proving \cref{lem:lxy-4}, let us introduce the following lemma.
\begin{lem}
  \label{lem:lxy-1}
  Let $ 0 < h \leqslant a $ and $ \gamma > 0 $. For any $ g \in \mathcal
  C^k[0, h] $ with $ 1 \leqslant k \leqslant m $, we have $ w \in \mathcal
  C^{k-1}[0, h] $ and
  \[
    \nm{w}_{ C^{k-1}[0, h] } \leqslant C \nm{g^{(k)}}_{C[0, h]},
  \]
  where
  \[
    w(x) := g(x) x^{\gamma-1}, \quad 0 < x \leqslant h,
  \]
  and $ C $ is a positive constant that only depends on $ a $, $ k $ and $
  \gamma $.
\end{lem}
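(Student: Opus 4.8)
The plan is to exploit the high order of vanishing of $g$ at the origin that is built into the definition of $\mathcal C^k[0, h]$. Since $g \in \mathcal C^k[0, h]$ we have $g^{(j)}(0) = 0$ for $j = 0, 1, \dots, k$, so Taylor's theorem with integral remainder gives, for each $0 \leqslant p \leqslant k$,
\[
  g^{(p)}(x) = \frac1{(k-p-1)!} \int_0^x (x-t)^{k-p-1} g^{(k)}(t) \, \mathrm{d}t,
  \quad 0 \leqslant x \leqslant h,
\]
(interpreting the $p = k$ case as $g^{(k)}$ itself), whence the pointwise bound $\snm{g^{(p)}(x)} \leqslant C x^{k-p} \nm{g^{(k)}}_{C[0, h]}$ with $C$ depending only on $k$. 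This single estimate drives everything that follows.

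Next I would differentiate $w(x) = g(x) x^{\gamma-1}$ on $(0, h]$ by the Leibniz rule. For $0 \leqslant i \leqslant k-1$ one obtains
\[
  w^{(i)}(x) = \sum_{p=0}^i \binom{i}{p}
  \left( \prod_{q=1}^{i-p} (\gamma-q) \right)
  g^{(p)}(x)\, x^{\gamma-1-(i-p)},
  \quad 0 < x \leqslant h,
\]
where the empty product (when $p = i$) equals $1$. Inserting the bound from the first step into the generic summand gives
\[
  \snm{ g^{(p)}(x)\, x^{\gamma-1-(i-p)} }
  \leqslant C x^{(k-p)+\gamma-1-(i-p)} \nm{g^{(k)}}_{C[0, h]}
  = C x^{(k-i)+\gamma-1} \nm{g^{(k)}}_{C[0, h]}.
\]
Because $i \leqslant k-1$ we have $k-i \geqslant 1$, so the exponent $(k-i)+\gamma-1 \geqslant \gamma > 0$; hence each summand is bounded on $(0, h]$ by $C h^{(k-i)+\gamma-1} \nm{g^{(k)}}_{C[0, h]} \leqslant C \nm{g^{(k)}}_{C[0, h]}$ (with $C$ now also absorbing the dependence on $a$ and $\gamma$) and tends to $0$ as $x \to 0+$. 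Summing over $p$ shows that $w^{(i)}$ is uniformly bounded on $(0, h]$ and that $\lim_{x\to 0+} w^{(i)}(x) = 0$ for every $0 \leqslant i \leqslant k-1$.

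Finally I would upgrade these one-sided limits into genuine smoothness on the closed interval. The elementary mean value fact that a function continuous on $[0, h]$ and $C^1$ on $(0, h]$ whose derivative has a finite limit $L$ at $0+$ is in fact differentiable at $0$ with derivative $L$, applied successively for $i = 1, 2, \dots, k-1$ starting from the continuous extension $w(0) := 0$, shows that $w \in C^{k-1}[0, h]$ with $w^{(i)}(0) = 0$ for all $0 \leqslant i \leqslant k-1$; thus $w \in \mathcal C^{k-1}[0, h]$. Combining this with the uniform term-by-term bounds yields $\nm{w}_{C^{k-1}[0, h]} \leqslant C \nm{g^{(k)}}_{C[0, h]}$, which is the claim. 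The only delicate point is this last step: the factor $x^{\gamma-1}$ is genuinely singular when $\gamma < 1$, so differentiability of $w$ at the origin cannot be read off the Leibniz formula directly and must be extracted from the vanishing of the limiting derivative values, boundedness alone being insufficient. Everything else is bookkeeping already encoded in the first estimate, the order of vanishing $k-p$ always being large enough to defeat the singular power precisely because $i$ never reaches $k$.
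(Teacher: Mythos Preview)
Your argument is correct and follows essentially the same route as the paper: both proofs use Taylor's formula with integral remainder to bound $\snm{g^{(p)}(x)} \leqslant C x^{k-p}\nm{g^{(k)}}_{C[0,h]}$, expand $w^{(i)}$ on $(0,h]$ via the Leibniz rule, observe that every summand carries the exponent $(k-i)+\gamma-1 \geqslant \gamma > 0$, and then invoke the Mean Value Theorem to extend differentiability to the closed interval. The only cosmetic difference is that the paper writes the Leibniz coefficients abstractly as $c_{ij}$ and separates the $k=1$ case, whereas you keep the explicit binomial form throughout.
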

\begin{proof}
  If $ k = 1 $, then, by the Mean Value Theorem and the fact $ g(0) = 0 $, this
  lemma is evident. Thus, below we assume that $ 2 \leqslant k \leqslant m $. In
  the rest of this proof, for ease of notation, the symbol $ C $ denotes a
  positive constant that only depends on $ a $, $ k $ and $ \gamma $, and its
  value may differ at each occurrence.

  Let us first show that, for $ 0 \leqslant i < k $, we have $ w_i \in \mathcal
  C[0, h] $ and
  \begin{equation}
    \label{eq:lxy-1}
    \nm{w_i}_{C[0, h]} \leqslant C \nm{g^{(k)}}_{C[0, h]},
  \end{equation}
  where
  \[
    w_i(x) := w^{(i)}(x), \quad 0 < x \leqslant h.
  \]
  To this end, let $ 0 \leqslant i < k $, and note that an elementary computing
  gives
  \begin{equation}
    \label{eq:basic-4-1}
    w_i(x) = \sum_{j=0}^i c_{ij} g^{(j)}(x) x^{ \gamma-1-i+j },
    \quad 0 < x \leqslant h,
  \end{equation}
  where $ c_{ij} $ is a constant that only depends on $ \gamma $, $ i $ and $ j
  $, for all $ 0 \leqslant j \leqslant i $.  Since $ g \in \mathcal C^k[0, h] $,
  we have $ g^{(j)} \in \mathcal C^{k-j}[0, h] $, and then, applying Taylor's
  formula with integral remainder yields
  \[
    g^{(j)}(x) = \frac1{(k-j-1)!}
    \int_0^x (x-t)^{k-j-1} g^{(k)}(t) \, \mathrm{d}t,
    \quad 0 \leqslant x \leqslant h.
  \]
  It follows that
  \begin{equation}
    \label{eq:basic-4-2}
    \snm{ g^{(j)}(x) x^{ \gamma-1-i+j } }
    \leqslant \frac{ \nm{g^{(k)}}_{C[0, h]} }{(k-j)!}
    x^{ \gamma+k-(i+1) }, \quad 0 < x \leqslant h.
  \end{equation}
  Since $ \gamma+k-(i+1) \geqslant \gamma > 0 $, this implies $ g^{(j)}(x)
  x^{\gamma-i-1+j} \in \mathcal C[0, h] $ and
  \[
    \nm{ g^{(j)}(\cdot) (\cdot)^{ \gamma-i-1+j } }_{C[0, h]}
    \leqslant C \nm{g^{(k)}}_{C[0, h]}.
  \]
  Therefore, by \cref{eq:basic-4-1} it follows $ w_i \in \mathcal C[0, h] $ and
  \cref{eq:lxy-1}.

  Then let us proceed to prove this lemma. Let $ i < k-1 $. Note that by
  \cref{eq:basic-4-1} we have
  \[
    w_i'(x) = w_{i+1}(x), \quad 0 < x \leqslant h.
  \]
  Since we have already proved that $ w_i $, $ w_{i+1} \in \mathcal C[0, h] $, by
  the Mean Value Theorem it is evident that $ w_i \in \mathcal C^1[0, h] $ and
  \[
    w_i'(x) = w_{i+1}(x), \quad 0 \leqslant x \leqslant h.
  \]
  It follows $ w_0 \in \mathcal C^{k-1}[0, h] $ and
  \[
    w_0^{(i)} = w_i, \quad 0 \leqslant i < k,
  \]
  and hence, by \cref{eq:lxy-1} we have
  \[
    \nm{w_0}_{ C^{k-1}[0, h] } \leqslant C \nm{g^{(k)}}_{C[0, h]}.
  \]
  Noting the fact $ w = w_0 $, this completes the proof.
\end{proof}
\noindent {\bf Proof of \cref{lem:lxy-4}}
  Below we employ the well-known principle of mathematical induction to prove
  this lemma. Firstly, it is clear that \cref{eq:lxy-4-1} holds in the case $ k
  = 0 $. Secondly, assuming that \cref{eq:lxy-4-1} holds for $ k = l $ where $ 0
  \leqslant l < m-1 $, let us prove that \cref{eq:lxy-4-1} holds for $ k = l+1
  $. To this end, a straightforward computing gives
  \begin{equation}
    \label{eq:lxy-4-2}
    ( \mathcal T_{w,\beta, h} z )'(x) =
    \mathcal T_{w',\beta, h} z(x) +
    \mathcal T_{ w,\widetilde \beta, h } z(x) +
    \mathcal T_{ \widetilde w,\overset{ \thickapprox }\beta, h } z(x)
  \end{equation}
  for all $ 0 < x \leqslant h $, where $ \widetilde \beta := (\beta_1, \beta_2,
  \dotsc, \beta_s, 1) $, $ \overset{\thickapprox}\beta, := (\beta_1, \beta_2,
  \dotsc, \beta_s, 2) $, and
  \[
    \widetilde w(x) := w(x) \left(
      z'(x) +
      \sum_{j=1}^J\gamma_j c_jx^{\gamma_j-1}
    \right).
  \]
  Since $ w \in \mathcal C^k[0, h] $, we have $ w' \in \mathcal C^{k-1}[0, h] $,
  and by \cref{lem:lxy-1} we have $ \widetilde w \in \mathcal C^{k-1}[0, h] $;
  consequently, $ \mathcal T_{w',\beta, h}z $ and $ \mathcal T_{ \widetilde w,
  \overset{\thickapprox} \beta, h } z $ are well-defined, and they both belong
  to $ \mathcal C[0, h] $. Therefore, by the Mean Value Theorem, and the fact $
  \mathcal T_{w,\beta, h} z \in \mathcal C[0, h] $, it follows that $ \mathcal
  T_{w,\beta, h}z \in \mathcal C^1[0, h] $, and \cref{eq:lxy-4-2} holds for all
  $ 0 \leqslant x \leqslant h $. By our assumption, we have the following
  results: $ \mathcal T_{w',\beta, h}z \in \mathcal C^{\min\{k-1, n-s\}}[0, h] $
  and
  \[
    \nm{\mathcal T_{w',\beta, h}z}_{C^{\min\{k-1, n-s\}}[0, h]}
    \leqslant C \nm{w'}_{C^{k-1}[0, h]}
    \sum_{j=0}^{\min\{k-1, n-s\}}\nm{z'}_{C^{m-1}[0, h]}^j;
  \]
  $ \mathcal T_{w,\widetilde\beta, h}z \in \mathcal C^{\min\{k-1, n-s-1\}}[0, h] $ and
  \[
    \nm{\mathcal T_{w,\widetilde\beta, h}z}_{C^{\min\{k-1, n-s-1\}}[0, h]}
    \leqslant C
    \nm{w}_{C^{k-1}[0, h]}
    \sum_{j=0}^{\min\{k-1, n-s-1\}}\nm{z'}_{C^{m-1}[0, h]}^j;
  \]
  $ \mathcal T_{\widetilde w,\overset{\thickapprox}\beta, h}z \in \mathcal
  C^{\min\{k-1, n-s-1\}}[0, h] $ and
  \[
    \nm{
      \mathcal T_{ \widetilde w,\overset{\thickapprox}\beta, h }z
    }_{ C^{\min\{k-1, n-s-1\}}[0, h] } \leqslant C
    \nm{ \widetilde w }_{ C^{k-1}[0, h] }
    \sum_{j=0}^{ \min\{k-1, n-s-1 \}}\nm{z'}_{ C^{m-1}[0, h] }^j.
  \]
  In addition, by \cref{lem:lxy-1} we easily obtain
  \[
    \nm{ \widetilde w }_{ C^{k-1}[0, h] } \leqslant C \nm{w}_{C^k[0, h]}
    \left( 1+\nm{z'}_{ C^{k-1}[0, h] } \right).
  \]
  As a consequence, we obtain $ \mathcal T_{w,\beta, h} z \in \mathcal
  C^{ \min \{ k, n-s \} } $ and
  \begin{align*}
    \nm{ ( \mathcal T_{w,\beta, h}z )' }_{ C^{\min\{k-1, n-s-1\}}[0, h] } \leqslant C
    \nm{w}_{C^k[0, h]}
    \sum_{j=0}^{ \min\{ k, n-s \} }\nm{z'}_{ C^{m-1}[0, h] }^j.
  \end{align*}
  Then \cref{eq:lxy-4-1} follows from the obvious estimate
  \[
    \nm{ \mathcal T_{w,\beta, h}z }_{C[0, h]} \leqslant C \nm{w}_{C[0, h]}.
  \]
  This completes the proof of \cref{lem:lxy-4}.
\hfill\ensuremath{\blacksquare}

\end{appendices}


\begin{thebibliography}{99}
  \bibitem{Diethelm;2007}
    K.\ Diethelm,
    Smoothness properties of solutions of Caputo-type fractional differential
    equations,
    Fract.\ Calc.\ Appl.\ Anal., 10 (2007), 151-160.
    \par
  \bibitem{Diethelm;2002}
    K.\ Diethelm and N.\ J.\ Ford,
    Analysis of fractional differential equations,
    J.\ Math.\ Anal.\ Appl.,
    265 (2002), 229-249.
    \par
  \bibitem{Deng;2010}
    W.\ Deng,
    Smoothness and stability of the solutions for nonlinear fractional
    differential equations,
    Nonlinear Analysis,
    72 (2010), 1768-1777.
    \par
  \bibitem{Lubich;1983}
    C.\ Lubich,
    Runge-Kutta theory for Volterra and Abel integral equations of the second
    kind,
    Math.\ Comput., 41 (1983), 87-102.
    \par
  \bibitem{Miller;Feldstein;1971}
    R.\ K.\ Miller and A.\ Feldstein,
    Smoothness of solutions of Volterra integral equations with weakly singular
    kernels,
    SIAM J.\ Math.\ Anal., 2 (1971), 242-258.
    \par
  \bibitem{Samko;1993}
    S.\ G.\ Samko, A.\ A.\ Kilbas, and O.\ I. Marichev,
    Fractional integrals and derivatives: theory and applications,
    Gordon and Breach, Yverdon (1993).
    \par
\end{thebibliography}
\end{document}